\newtheorem{theorem}{Theorem}[section]
\newtheorem{lemma}[theorem]{Lemma}
\numberwithin{equation}{section}
\newenvironment{proof}[1][Proof]{\noindent\textbf{#1.} }{\hfill $\Box$}
 \makeatletter\setlength{\textwidth}{15.0cm}
\begin{document}
\title{{A Beale--Kato--Majda criterion for the 3-D Compressible Nematic Liquid Crystal Flows with Vacuum}
\thanks{Research supported by the National Natural Science Foundation of China (11171357).}
}
\author{{\small   Qiao Liu \thanks{\text{E-mail address}:  liuqao2005@163.com.
}}
 {\small\quad and\quad  Shangbin Cui \thanks{\text{E-mail
address}:
cuisb3@yahoo.com.cn.}}\\
{\small  Department of Mathematics, Sun Yat-sen
University, Guangzhou, Guangdong 510275,}\\
{\small People's Republic
 of China}\\
}
\date{}
\maketitle

\begin{abstract}
In this paper, we prove a Beale--Kato--Majda blow-up criterion in
terms of the gradient of the velocity only for the strong solution
to the 3-D compressible nematic liquid crystal flows with
nonnegative initial densities. More precisely, the strong solution
exists globally if the $L^{1}(0,T;L^{\infty})$-norm of the gradient
of the velocity $u$ is bounded. Our criterion improves the recent
result of X. Liu and L. Liu (\cite{LL}, A blow-up criterion for the
compressible liquid crystals system, arXiv:1011.4399v2 [math-ph] 23
Nov. 2010).
\medskip

\textbf{Keywords}: Compressible nematic liquid crystal flows; strong
solution; blow-up criterion; Compressible Navier--Stokes equations

\textbf{2010 AMS Subject Classification}: 76A15, 76N10, 35B65, 35Q35
\end{abstract}

\section{Introduction}\label{Int}

\noindent

The governing system of equations for the compressible nematic
liquid (NLC) crystal flows is the following system of scalar or
vector fields $\rho(t,x)$, $u(t,x)$ and $d(t,x)$ for $(t,x)\in
(0,+\infty)\times\Omega$, for a bounded smooth domain $\Omega\subset
\mathbb{R}^{3}$:
\begin{equation}
\label{NLC} \left\{
\begin{array}{l}
\partial_{t}\rho+\operatorname{div} (\rho u)=0,\\
{\partial_{t}}(\rho u) +\operatorname{div}(\rho u\otimes u)+\nabla{P}=\mu\Delta u-\lambda\nabla\cdot(\nabla d \odot\nabla d
-\frac{1}{2}(|\nabla d|^{2}+F(d)I)),\\
\partial_{t}d+(u\cdot\nabla)d=\nu(\Delta d-f(d))
\end{array}
\right.
\end{equation}
together with the initial value conditions:
\begin{align}\label{eq1.2}
\rho(0,x)=\rho_{0}(x)\geq 0, \quad u(0,x)=u_{0}(x),\quad
d(0,x)=d_{0}(x), \quad\forall x\in\Omega,
\end{align}
and the boundary value conditions:
\begin{align}\label{eq1.3}
u(t,x)=0,\quad d(t,x)=d_{0}(x),\quad |d_{0}(x)|=1, \quad\forall
(t,x)\in [0,+\infty)\times \partial\Omega.
\end{align}
Here we denote by $\rho$, $u=(u_{1},u_{2},u_{3})$,
$d=(d_{1},d_{2},d_{3})$ the unknown density, velocity and
orientation parameter of liquid crystal, respectively, and
$P=P(\rho)$ is the pressure function. Besides, $\mu,\lambda$ and
$\nu$ are positive viscosity coefficients. The non-standard term
$\nabla d\odot \nabla d$ denotes the $3\times 3$ matrix, whose
$(i,j)$-th element is given by
$\sum_{k=1}^{3}\partial_{i}d_{k}\partial_{j}d_{k}$. $I$ is the unit
matrix. $f(d)$ is a polynomial function of $d$ which satisfies
$f(d)=\frac{\partial}{\partial_{d}}F(d)$, where $F(d)$ is the bulk
part of the elastic energy; usually we choose $F(d)$ to be the
Ginzburg--Landan penalization, i.e.,
$F(d)=\frac{1}{4\sigma^{2}}(|d|^{2}-1)^{2}$ and
$f(d)=\frac{1}{\sigma^{2}}(|d|^{2}-1)d$, where $\sigma$ is a
positive constant. In what follows, we will assume $\sigma=1$ since
its specific value does not play a special role in our discussion.
Besides, we assume that the pressure function $P$ satisfies
\begin{align}\label{eq1.4}
P=P(\cdot)\in C^{1}[0,\infty),\quad P(0)=0.
\end{align}

The above system \eqref{NLC} is a simplified version of
Ericksen--Lesile system modeling the flow of compressible nematic
liquid crystals, and the hydrodynamic theory of liquid crystals was
established by Ericksen \cite{ER1,ER2} and Leslie \cite{LE} in the
1960's.  When $d\equiv 0$, the system becomes to the compressible
Navier--Stokes (CNS) equations. Matsumura and Nishida \cite{MN}
obtained global existence of smooth solutions for the initial data
is a small perturbation of a non--vacuum equilibrium. For the
existence of solutions for arbitrary initial value, Lions \cite{PL}
and Feireisl \cite{EF} established the global existence of weak
solution to the CNS equations. Cho et al.\cite{CK1,CCK,CK2} proved
that the existence and uniqueness of local strong solutions of the
CNS equations in the case where initial density need not to be
positive  and may vanish in an open set. Xin in \cite{X} showed that
there is no global smooth solution to the Cauchy problem of the CNS
equations with a nontrivial compactly supported initial density.
Hence, there are many works \cite{CCK,FJ,FJO,HX,HLX1,HLX2,SWZ1,SWZ2}
try to establish blow--up criterion for the strong solution to the
CNS equations. In particular, it is proved in \cite{HLX2} by Huang,
Li and Xin that the serrin's blow--up criterion (see \cite{JS}) for
the incompressible Navier--Stokes equations still holds for the CNS
equations, i.e., if $T^{*}$ is the maximal time of existence strong
solution, then
\begin{align}\label{eq1.5}
\lim_{T\rightarrow T^{*}}(\|\operatorname{div}
u\|_{L^{1}(0,T;L^{\infty})}+\|\rho^{\frac{1}{2}}
u\|_{L^{s}(0,T;L^{r})})=\infty
\end{align}
or
\begin{align}\label{eq1.6}
\lim_{T\rightarrow
T^{*}}(\|\rho\|_{L^{1}(0,T;L^{\infty})}+\|\rho^{\frac{1}{2}}
u\|_{L^{s}(0,T;L^{r})})=\infty,
\end{align}
where $r$ and $s$ satisfy $\frac{2}{s}+\frac{3}{r}\leq 1$, $3< r\leq
\infty$. In \cite{HX,HLX1}, Huang et al. established that the
Beale--Kato--Majda criterion (see \cite{BKM}) for the ideal
incompressible flows still hold for the CNS equations, that is
\begin{align*}
\lim_{T\rightarrow T^{*}}\int_{0}^{T}\|\nabla
u\|_{L^{\infty}}\text{d} t=\infty.
\end{align*}
Sun, Wang and Zhang in \cite{SWZ1} (see also \cite{HLX2}) obtained
another Beale--Kato--Majda criterion in terms of the density, i.e.,
\begin{align*}
\lim\sup_{T\rightarrow T^{*}}\|\rho
\|_{L^{\infty}(0,T;L^{\infty})}=\infty.
\end{align*}

When $\rho$ is a positive constant, the system \eqref{NLC} becomes
to the incompressible nematic liquid crystal (INLC) equations, the
global-in-time weak solutions and local-in-time strong solution have
been studied by Lin and Liu \cite{LL1,LL2}. In \cite{HW}, Hu and
Wang established global existence of strong solutions and
weak--strong uniqueness for initial data belonging to the Besov
spaces of positive order under some smallness assumptions. Liu and
Cui in \cite{LC} obtained that the blow--up criterion \eqref{eq1.5}
or \eqref{eq1.6} still holds for the solution of the INLC equations.
We also refer \cite{HKL,L,LW1,LW,SL} and the reference cited therein
for other related work on the INLC equations.

Inspired by the above mentioned works on blow--up criterion of
strong solution of CNS and INLC equations, particularly the results
of Huang etal. \cite{HX,HLX1} and Sun et al. \cite{SWZ1,SWZ2}, we
want to investigate a similar problem for the compressible nematic
liquid crystal flow \eqref{NLC}--\eqref{eq1.3}. Before stating the
main result, we denote the following simplified notations of Sobolev
spaces
\begin{align*}
L^{q}:=L^{q}(\Omega),\quad W^{k,p}:=W^{k,p}(\Omega),\quad
H^{k}:=H^{k}(\Omega),\quad H_{0}^{1}:=H_{0}^{1}(\Omega).
\end{align*}

When the initial vacuum is allowed, the well-posedness and blow--up
criterion for strong solutions to the compressible nematic liquid
crystal flows \eqref{NLC}--\eqref{eq1.3} have been investigated by
Liu et al. in \cite{LL,LLH}. Here, we write down the main results of
Liu et al. \cite{LL,LLH}.

\begin{theorem}\label{thm1.1}
Suppose that the initial value $(\rho_{0},u_{0},d_{0})$ satisfies
the following regularity conditions
\begin{align*}
0\leq \rho_{0}\in W^{1,6},\quad u_{0}\in H_{0}^{1}\cap H^{2}
\quad\text{ and }\quad d_{0}\in H^{3},
\end{align*}
and the compatibility condition
\begin{align}\label{eq1.7}
\mu \Delta u_{0}-\lambda \operatorname{div} (\nabla d_{0}\odot\nabla
d_{0}-\frac{1}{2}(|\nabla d_{0}|^{2}+F(d_{0})))-\nabla
P(\rho_{0})=\sqrt{\rho}g \text{ for some } g\in L^{2}.
\end{align}
Then there exist a small $T\in (0,\infty)$ and a unique strong
solution $(\rho,u,d)$ to the system \eqref{NLC} with initial
boundary condition \eqref{eq1.2}--\eqref{eq1.3} such that
\begin{align*}
&0\leq \rho\in C([0,T);W^{1,6}),\quad\quad\quad\quad\quad\quad \rho_{t}\in C([0,T);L^{6}),\\
&u\in C([0,T);H^{1}_{0}\cap H^{2})\cap L^{2}(0,T;W^{2,6}),\quad
u_{t}\in L^{2}(0,T;H^{1}_{0}),\\
&d\in C([0,T);H^{3}),\quad\quad\quad\quad\quad\quad d_{t}\in
C([0,T);H^{1}_{0})\cap
L^{2}(0,T;H^{2}),\\
&d_{tt}\in L^{2}(0,T;L^{2}),\quad\quad\quad\quad\quad\quad
\sqrt{\rho}u_{t}\in C([0,T);L^{2}).
\end{align*}
Moreover, let $T^{*}$ be the maximal existence time of the solution.
If $T^{*}<\infty$, then there holds
\begin{align}\label{eq1.8}
\lim_{T\rightarrow T^{*}} \int_{0}^{T}(\|\nabla
u\|^{\beta}_{L^{\alpha}}+\|u\|_{W^{1,\infty}})\text{d}t=\infty,
\end{align}
where $\alpha,\beta$ satisfying $\frac{3}{\alpha}+\frac{2}{\beta}<2$
and $\beta\geq 4$.
\end{theorem}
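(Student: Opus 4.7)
The theorem bundles three assertions — local existence, uniqueness, and the blow-up criterion \eqref{eq1.8} — and I would handle them in that order.

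For \emph{local existence} I would follow a Cho--Kim type iteration scheme of the kind used in \cite{CK1,CCK,CK2} for the compressible Navier--Stokes system. Regularize the initial density as $\rho_{0}^{\epsilon}=\rho_{0}+\epsilon$ so that $\rho_{0}^{\epsilon}\geq\epsilon>0$ and construct a sequence $(\rho^{n+1},u^{n+1},d^{n+1})$ by: (i) solving the transport equation $\partial_{t}\rho^{n+1}+\operatorname{div}(\rho^{n+1}u^{n})=0$; (ii) solving the linear parabolic problem $\rho^{n+1}\partial_{t}u^{n+1}+\rho^{n+1}(u^{n}\cdot\nabla)u^{n+1}-\mu\Delta u^{n+1}=-\nabla P(\rho^{n+1})-\lambda\operatorname{div}(\nabla d^{n}\odot\nabla d^{n}-\frac12(|\nabla d^{n}|^{2}+F(d^{n}))I)$; and (iii) solving the linear parabolic equation for $d^{n+1}$ with transport velocity $u^{n+1}$. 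The compatibility condition \eqref{eq1.7} is exactly what is needed to start the induction, since it provides a uniform $L^{2}$ bound for $\sqrt{\rho^{n+1}}\partial_{t}u^{n+1}$ at $t=0$. Uniform a priori estimates in the prescribed regularity class followed by a contraction in a weaker norm give a local solution for each $\epsilon$, and the limit $\epsilon\to 0^{+}$ is obtained from weak compactness. Uniqueness is proved by subtracting two solutions, testing the resulting equations by the velocity and director differences, and absorbing nonlinear terms using an $L^{\infty}$ density bound and Sobolev inequalities.

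For the \emph{blow-up criterion} I would argue by contradiction. Suppose $T^{*}<\infty$ and $M:=\int_{0}^{T^{*}}(\|\nabla u\|_{L^{\alpha}}^{\beta}+\|u\|_{W^{1,\infty}})\,\text{d}t<\infty$, and derive bounds on $[0,T^{*})$ in the norm of the existence class. The first step is the basic energy identity obtained by testing the momentum equation by $u$ and the director equation by $-\Delta d+f(d)$, which yields $L^{\infty}_{t}L^{2}_{x}$ control of $\sqrt{\rho}\,u$ and $\nabla d$ and $L^{2}_{t,x}$ control of $\nabla u$ and $\Delta d$. The second step is an $L^{\infty}$ bound on $\rho$: integrating along characteristics of $u$ yields $\rho(t,X(t))=\rho_{0}(X(0))\exp\bigl(-\int_{0}^{t}\operatorname{div}u\,\text{d}s\bigr)$, so the $L^{1}_{t}L^{\infty}_{x}$ bound on $\nabla u$ encoded in $\|u\|_{L^{1}(W^{1,\infty})}$ gives a pointwise upper bound on $\rho$. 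A similar Lagrangian argument, combined with the effective viscous flux representation arising from the momentum equation, then bounds $\nabla\rho$ in $L^{\infty}_{t}L^{6}_{x}$.

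The main quantitative step is to control $\|\nabla u\|_{L^{\infty}_{t}L^{2}_{x}}$ and $\|\sqrt{\rho}\,u_{t}\|_{L^{\infty}_{t}L^{2}_{x}}$. I would test the momentum equation by $u_{t}$, then differentiate it in time and test the result by $u_{t}$, and in parallel test the director equation by higher derivatives of $d$ (for example $\Delta^{2}d$), producing coupled energy inequalities in which each nonlinear term involving the velocity is bounded by $\|\nabla u\|_{L^{\alpha}}$ times lower-order norms via Sobolev embedding and interpolation. The hypothesis $\tfrac{3}{\alpha}+\tfrac{2}{\beta}<2$ with $\beta\geq 4$ is precisely what makes the resulting time factors integrable, so a Gronwall argument closes the estimate. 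The higher-order regularity of $u,d,\rho$ matching the existence class is then recovered from elliptic and parabolic regularity applied at fixed time, and $W^{1,6}$ regularity of $\rho$ from differentiating the transport equation and using the bound on $\nabla u$. The principal obstacle I expect is the \emph{coupling via the Ericksen stress} $\nabla d\odot\nabla d$ in the momentum equation, which prevents treating $u$ and $d$ sequentially: the second-order energy for $d$ must be carried in parallel with the first-order energy for $u$, and the critical cross-integrals $\int\nabla u:(\nabla d\odot\nabla d)\,\text{d}x$ must be split so that the top-order pieces are absorbed by the dissipations $\mu\|\nabla u\|_{L^{2}}^{2}$ and $\nu\|\Delta d\|_{L^{2}}^{2}$, with the remainder controlled by $\|\nabla u\|_{L^{\alpha}}^{\beta}$. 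Once this closure is obtained, the contradiction with the maximality of $T^{*}$ establishes \eqref{eq1.8}.
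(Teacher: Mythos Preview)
The paper does not prove Theorem~\ref{thm1.1}. Immediately before the statement it says that the well-posedness and the blow-up criterion were ``investigated by Liu et al.\ in \cite{LL,LLH}'' and that the authors ``write down the main results of Liu et al.\ \cite{LL,LLH}.'' Theorem~\ref{thm1.1} is therefore quoted from those references, and the present paper supplies no proof of its own to compare your proposal against; its contribution is Theorem~\ref{thm1.2}, which sharpens \eqref{eq1.8} to the single condition \eqref{eq1.9}.

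That said, your outline is broadly consonant with how results of this type are established. The Cho--Kim iteration with the $\epsilon$-regularized density and the role of the compatibility condition \eqref{eq1.7} are exactly the ingredients used in \cite{CCK,CK1,CK2} and adapted to the liquid-crystal setting in \cite{LLH}. For the blow-up part, your contradiction scheme and the use of the $L^{1}_{t}L^{\infty}_{x}$ bound on $\nabla u$ to control $\|\rho\|_{L^{\infty}}$ mirror the strategy the present paper employs for Theorem~\ref{thm1.2} (see Lemma~\ref{lem2.1}). One caution: you sketch obtaining $\nabla\rho\in L^{\infty}_{t}L^{6}_{x}$ early, via a ``Lagrangian argument combined with the effective viscous flux.'' In this paper (and in \cite{LL}) that bound is \emph{not} available until the very end: one first closes the coupled $H^{1}$-level estimates for $(u,\rho,d)$ (Lemma~\ref{lem2.4}), then the $\sqrt{\rho}\,u_{t}$ and $\nabla d_{t}$ estimates (Lemma~\ref{lem2.5}), and only afterwards differentiates the continuity equation and uses $\|\nabla^{2}u\|_{L^{6}}$ to get $\|\nabla\rho\|_{L^{6}}$ (Lemma~\ref{lem2.6}). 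If you try to place the $W^{1,6}$ density bound before the higher-order velocity estimates you will find the argument does not close, since $\|\nabla\operatorname{div}u\|_{L^{6}}$ is what drives \eqref{eq2.50}. Apart from this ordering issue, your plan matches the standard route taken in \cite{LL,LLH}.
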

\textbf{Remark 1.1} Another similar system of partial differential
equations modeling compressible nematic liquid crystal flows has
been studied by Huang, Wang and Wen in \cite{HWW1,HWW2}. They
obtained the existence of local in time strong solution and two
blow--up criteria under some suitable assumption condition $u$ and
$d$ or $\rho$ and $d$.
\medskip

The purpose of this paper is to obtain the Beale--Kato--Majda
blow--up criterion only in terms of the gradient of the velocity
still holds for the liquid crystal flows. Our main result is the
following

\begin{theorem}\label{thm1.2}
Assume that $(\rho, u, d)$ is the strong solution constructed in
Theorem \ref{thm1.1}, and $T^{*}$ be the maximal existence time of
the solution. If $T^{*}<\infty$, then we have
\begin{align}\label{eq1.9}
\limsup_{T\rightarrow T^{*}}\|\nabla
u\|_{L^{1}(0,T;L^{\infty})}=\infty.
\end{align}
\end{theorem}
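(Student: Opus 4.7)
The plan is to argue by contradiction. Suppose $\limsup_{T\to T^{*}}\|\nabla u\|_{L^{1}(0,T;L^{\infty})}\le M<\infty$. I aim to derive a priori bounds, uniform in $T<T^{*}$, for all the norms appearing in the regularity class of Theorem \ref{thm1.1}; applying the local existence theorem at a time close to $T^{*}$ with this uniform control then extends the solution beyond $T^{*}$, contradicting the maximality of $T^{*}$.

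The simplest consequence comes from the continuity equation rewritten as $\partial_{t}\rho+u\cdot\nabla\rho=-\rho\,\operatorname{div}u$: integrating along the flow of $u$ gives
$$\|\rho(t)\|_{L^{\infty}}\le \|\rho_{0}\|_{L^{\infty}}\exp\Big(\int_{0}^{t}\|\operatorname{div}u(s)\|_{L^{\infty}}\,\text{d}s\Big)\le C(M).$$
The basic energy identity, obtained by testing the momentum equation with $u$ and the director equation with $\lambda\nu^{-1}(-\Delta d+f(d))$ and summing, then supplies $L^{\infty}_{t}L^{2}_{x}$ control of $\sqrt{\rho}\,u$ and $\nabla d$ together with $L^{2}_{t}L^{2}_{x}$ control of $\nabla u$ and $\Delta d-f(d)$.

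The main work is the bootstrap to higher order. Testing the momentum equation with $u_{t}$, performing in parallel an $H^{2}$-type energy estimate on the $d$-equation, and using the already-obtained $L^{\infty}$ bound on $\rho$, I expect a differential inequality whose right-hand side is dominated by $\|\nabla u\|_{L^{\infty}}$ times quantities already controlled, plus lower-order terms; Gronwall with the integrable multiplier $\|\nabla u(t)\|_{L^{\infty}}$ then bounds $\|\nabla u\|_{L^{2}}$, $\|\Delta d\|_{L^{2}}$ and the $L^{2}_{t}L^{2}_{x}$ norms of $\sqrt{\rho}u_{t}$ and $\nabla d_{t}$. Differentiating the momentum equation in time and testing with $u_{t}$ next yields the $L^{\infty}_{t}L^{2}_{x}$ bound on $\sqrt{\rho}u_{t}$ and the $L^{2}_{t}H^{1}_{x}$ bound on $u_{t}$; the nonlinear terms $\rho u\cdot\nabla u\cdot u_{t}$ and the time derivative of the coupling $\nabla d\odot\nabla d$ are handled via Gagliardo--Nirenberg together with the density and director bounds already in hand. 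Feeding $\rho u_{t}$, $\rho u\cdot\nabla u$, and the director source into elliptic regularity for the Lam\'e system $\mu\Delta u-\nabla P=\rho(u_{t}+u\cdot\nabla u)+\lambda\nabla\cdot(\nabla d\odot\nabla d-\tfrac{1}{2}(|\nabla d|^{2}+F(d))I)$ returns the $H^{2}$-bound on $u$, upgraded to $W^{2,6}$ by a second elliptic iteration. Differentiating the continuity equation controls $\|\nabla\rho\|_{L^{6}}$ through a Gronwall argument driven once more by $\|\nabla u\|_{L^{\infty}}$, and parabolic regularity applied to the $d$-equation promotes $d$ to $H^{3}$ and delivers the needed bounds on $d_{t}$ and $d_{tt}$.

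The principal obstacle is the $L^{\infty}_{t}L^{2}_{x}$ estimate for $\sqrt{\rho}u_{t}$, where the quadratic gradient coupling $\nabla d\odot\nabla d$ entangles the $u$- and $d$-regularities. The crucial point will be to arrange the Gronwall scheme so that the only time-nonlocal factor multiplying the target quantity is $\|\nabla u(t)\|_{L^{\infty}}$, which is integrable by hypothesis, and to perform the $u$- and $d$-higher-order estimates simultaneously in order to avoid any circular dependence, with all constants depending only on $M$ and the initial data.
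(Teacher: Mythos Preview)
Your contradiction-and-bootstrap strategy matches the paper's, and the sequence you outline (density $L^\infty$, energy, coupled first-order Gronwall for $\nabla u$ and $\Delta d$, time-differentiated momentum estimate, elliptic $W^{2,6}$, $\nabla\rho$ in $L^6$) is essentially the content of the paper's Lemmas~2.1--2.6.

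There is, however, one ingredient you do not mention and which the paper explicitly flags as \emph{the key fact}: before any higher-order work, the paper proves (Lemma~2.3) that the hypothesis $\int_0^T\|\nabla u\|_{L^\infty}\,dt\le M$ alone yields
\[
\sup_{0\le t\le T}\bigl(\|d\|_{L^q}+\|\nabla d\|_{L^q}\bigr)\le C\qquad\text{for all }2\le q\le\infty,
\]
by multiplying $\nabla$ of the director equation by $q|\nabla d|^{q-2}\nabla d$, observing that after integration by parts the only unsigned terms are bounded by $C(\|\nabla u\|_{L^\infty}+1)\|\nabla d\|_{L^q}^q$, applying Gronwall, and sending $q\to\infty$. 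This pointwise control on $\nabla d$ is then used throughout the subsequent estimates (see the paper's (2.20), (2.26), (2.29), (2.31), (2.32)) to reduce every occurrence of the coupling $(\nabla d)^T(\Delta d-f(d))$ to a term that is \emph{linear} in the Gronwall targets.

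Your plan is instead to obtain $\|\nabla d\|_{L^p}$ for $p>2$ only through the simultaneous $H^2$-estimate on $d$, i.e.\ as part of the Gronwall target rather than as an a priori bounded coefficient. This is where the scheme risks not closing. For instance, after testing the momentum equation with $u_t$ (or $\dot u$) and integrating the coupling term by parts, one meets $\int\partial_j u_i\,\partial_i d_t\,\partial_j d\,dx$; the paper bounds this by $\|\nabla u\|_{L^2}\|\nabla d\|_{L^\infty}\|\nabla d_t\|_{L^2}$, linear in the targets with a known coefficient. If $\|\nabla d\|_{L^\infty}$ is not available and one substitutes $\|\nabla d\|_{L^6}\lesssim\|\Delta d\|_{L^2}$, the H\"older split forces $\|\nabla u\|_{L^3}$, whose control via elliptic regularity reintroduces $\|\rho^{1/2}\dot u\|_{L^2}$, $\|\nabla\rho\|_{L^2}$, $\|\Delta d\|_{L^2}$, and the resulting differential inequality becomes superlinear in the targets. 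The clean remedy is exactly Lemma~2.3; once you insert it, the rest of your outline goes through as in the paper. (A minor technical difference: at the first-order stage the paper tests with the material derivative $\dot u=u_t+u\cdot\nabla u$ rather than $u_t$, but this is a variant, not a different idea; also note the viscous operator here is just $\mu\Delta u$, not the full Lam\'e operator.)
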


The proof of this theorem will be given in the next section. As a
standard practice, we will show that if (\ref{eq1.9}) does not hold
then the strong solution $(\rho,u,d)$ can be extended beyond the
time $T^*$. To this end we will step-by-step establish a series of
higher-order norm estimates for the strong solution $(\rho,u,d)$.
The key fact used in this deduction is that the boundedness of the
$L^{1}(0,T; L^{\infty})$-norm of $\nabla u$ implies both boundedness
of the $L^{\infty}(0,T;L^{\infty})$-norm of the density $\rho$ and
boundedeness of the $L^{\infty}(0,T;W^{1,q})$-norm of $d$ with
$2\leq q\leq \infty$.

\section{Proof of Theorem \ref{thm1.2}}

Let $(\rho, u, d)$ be the unique strong solution to the system
\eqref{NLC} with initial--boundary condition
\eqref{eq1.2}--\eqref{eq1.3}. We assume that the opposite to
\eqref{eq1.9} holds, i.e.,
\begin{align}\label{eq2.1}
\lim_{T\rightarrow T^{*}} \|\nabla u\|_{L^{1}(0,T;L^{\infty})}\leq
M<\infty.
\end{align}
In what follows, we note that $C$ denotes a generic constant
depending only on $\mu,\lambda,\nu, M, T,\Omega$ and the initial
data. By using the mass conservation equation $\eqref{NLC}_{1}$ and
the assumption \eqref{eq2.1}, it is easy to obtain the
$L^{\infty}$-norm bounds of the density,

\begin{lemma}\label{lem2.1}
Assume that
\begin{align}\label{eq2.2}
\int_{0}^{T}\|\operatorname{div}u\|_{L^{\infty}}\text{d}t\leq
C,\quad \quad 0\leq T< T^{*},
\end{align}
then
\begin{align}\label{eq2.3}
\|\rho\|_{L^{\infty}(0,T;L^{\infty})}\leq C\quad\quad \forall 0 \leq
T<T^{*}.
\end{align}
\end{lemma}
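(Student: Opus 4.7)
\textbf{Proof plan for Lemma \ref{lem2.1}.}
The plan is to rewrite the mass conservation equation $\eqref{NLC}_1$ in non-conservative form and then integrate along the flow map of $u$. Expanding the divergence, the continuity equation becomes
\begin{equation*}
\partial_t \rho + u\cdot\nabla \rho = -\rho\,\operatorname{div} u,
\end{equation*}
which is a linear transport equation for $\rho$ with source $-\rho\,\operatorname{div} u$.

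Next, I would introduce the characteristic flow $X(t;x)$ defined by $\frac{d}{dt}X(t;x) = u(t,X(t;x))$, $X(0;x)=x$. This flow is well-defined on $[0,T^{*})$ because $u\in C([0,T);H_{0}^{1}\cap H^{2})\hookrightarrow C([0,T);W^{1,\infty})$ by the regularity in Theorem \ref{thm1.1}, so $u$ is Lipschitz in $x$ uniformly on bounded time intervals; in particular the no-slip boundary condition keeps trajectories inside $\Omega$. Along characteristics the PDE collapses to the ODE
\begin{equation*}
\tfrac{d}{dt}\bigl(\rho(t,X(t;x))\bigr) = -\rho(t,X(t;x))\,\operatorname{div} u(t,X(t;x)),
\end{equation*}
whose explicit solution is
\begin{equation*}
\rho(t,X(t;x)) = \rho_0(x)\exp\!\left(-\int_0^t \operatorname{div} u(s,X(s;x))\,ds\right).
\end{equation*}

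Taking absolute values and using the bound $|\operatorname{div} u(s,\cdot)|\leq \|\operatorname{div} u(s)\|_{L^{\infty}}$ pointwise gives
\begin{equation*}
\|\rho(t)\|_{L^{\infty}} \leq \|\rho_0\|_{L^{\infty}}\exp\!\left(\int_0^t \|\operatorname{div} u(s)\|_{L^{\infty}}\,ds\right) \leq \|\rho_0\|_{L^{\infty}}\,e^{C}
\end{equation*}
for every $t\in[0,T)$ by hypothesis \eqref{eq2.2}, and $\|\rho_0\|_{L^\infty}$ is finite since $\rho_0\in W^{1,6}\hookrightarrow L^{\infty}$ in three dimensions. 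Taking the supremum in $t$ yields \eqref{eq2.3}.

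There is no real obstacle here; the argument is standard for the compressible continuity equation. The only mild point to verify is that the characteristic flow is a diffeomorphism of $\overline{\Omega}$ covering all of $\Omega$ at each time, so that the pointwise estimate along trajectories translates into an $L^{\infty}$ bound on all of $\Omega$, but this follows immediately from the Lipschitz regularity of $u$ and the no-slip condition $u|_{\partial\Omega}=0$.
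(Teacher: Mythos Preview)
Your argument is correct and takes a genuinely different route from the paper's. The paper proceeds by an $L^{q}$ energy method: it multiplies $\eqref{NLC}_{1}$ by $q\rho^{q-1}$, integrates over $\Omega$ to obtain
\[
\partial_{t}\|\rho\|_{L^{q}}\leq \frac{q-1}{q}\|\operatorname{div}u\|_{L^{\infty}}\|\rho\|_{L^{q}},
\]
applies Gronwall's inequality to get a bound on $\|\rho(t)\|_{L^{q}}$ independent of $q$, and then sends $q\to\infty$. Your Lagrangian representation along the flow map is more direct---it yields the $L^{\infty}$ bound in a single step without any limiting procedure, and it makes the explicit constant $\|\rho_{0}\|_{L^{\infty}}e^{C}$ transparent. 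Both methods are standard for the compressible continuity equation; the paper's choice has the mild advantage of avoiding the need to discuss the flow map, while yours avoids the somewhat formal passage $q\to\infty$.

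One technical correction: the embedding you invoke, $H^{2}(\Omega)\hookrightarrow W^{1,\infty}(\Omega)$, is \emph{false} in three space dimensions---$H^{2}$ only gives $\nabla u\in H^{1}\hookrightarrow L^{6}$, not $L^{\infty}$. To justify the existence and regularity of the characteristic flow you should instead appeal to the regularity $u\in L^{2}(0,T;W^{2,6})$ from Theorem~\ref{thm1.1}, which in three dimensions yields $\nabla u\in L^{2}(0,T;W^{1,6})\hookrightarrow L^{2}(0,T;L^{\infty})\subset L^{1}(0,T;L^{\infty})$; this is exactly what is needed for Cauchy--Lipschitz with a time-integrable Lipschitz constant. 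With that fix, the argument is complete.
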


\begin{proof}
The proof is essentially due to Huang and Xin \cite{HX}, for
reader's convenience, we sketch it here.

Multiplying the mass conservation equation $\eqref{NLC}_{1}$ by
$q\rho^{q-1}$ with $q>1$, it follows that
\begin{align*}
\partial_{t}(\rho^{q})
+\operatorname{div}(\rho^{q}u)+(q-1)\rho^{q}\operatorname{div}u=0.
\end{align*}
Integrating the above equality over $\Omega$ yields
\begin{align*}
\partial_{t}\|\rho\|_{L^{q}}^{q}\leq
(q-1)\|\operatorname{div}u\|_{L^{\infty}}\|\rho\|_{L^{q}}^{q},
\end{align*}
i.e.,
\begin{align}\label{eq2.4}
\partial_{t}\|\rho\|_{L^{q}}\leq
\frac{(q-1)}{q}\|\operatorname{div}u\|_{L^{\infty}}\|\rho\|_{L^{q}}.
\end{align}
The condition \eqref{eq2.2} and the estimate \eqref{eq2.3} imply
that
\begin{align*}
\partial_{t}\|\rho\|_{L^{q}}\leq
C\quad \text{ for } \forall q>1,
\end{align*}
where $C$ is a positive constant independent of $q$, letting
$q\rightarrow\infty$, we obtain \eqref{eq2.3}, and this completes
the proof of the lemma.
\end{proof}
\medskip

According to the assumption \eqref{eq1.4} on the pressure $P$ and
Lemma \ref{lem2.1}, it is easy to obtain
\begin{align}\label{eq2.5}
\sup_{0\leq t\leq
T}\{\|P(\rho)\|_{L^{\infty}},\|P'(\rho)\|_{L^{\infty}}\}\leq
C<\infty.
\end{align}

Now, let us derive the stand energy inequality.
\begin{lemma}\label{lem2.2}
There holds
\begin{align}\label{eq2.6}
\sup_{0\leq t\leq T}\int_{\Omega}(\rho|u|^{2}+|\nabla
d|^{2}+2F(d))\text{d}x+\int_{0}^{T}\int_{\Omega}|\nabla
u|^{2}\text{d}x\text{d}t+\int_{0}^{T}\int_{\Omega} |\Delta
d-f(d)|^{2}\text{d}x\text{d}t\leq C.
\end{align}
\end{lemma}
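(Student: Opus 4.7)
The plan is to establish the classical energy balance by taking the inner products of the momentum equation with $u$ and of the director equation with $\lambda(-\Delta d+f(d))$, integrating over $\Omega$, and summing. The boundary data $u|_{\partial\Omega}=0$ together with the time-independence of $d|_{\partial\Omega}=d_0$ (which forces $\partial_t d|_{\partial\Omega}=0$) kill every surface term that would otherwise appear during the integrations by parts.

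First, testing $(\ref{NLC})_2$ with $u$ and using $(\ref{NLC})_1$ to telescope the kinetic piece gives, after an integration by parts in the Ericksen stress,
$$\tfrac{1}{2}\tfrac{d}{dt}\int_\Omega\rho|u|^2\,dx+\mu\int_\Omega|\nabla u|^2\,dx=\int_\Omega P\operatorname{div} u\,dx+\lambda\int_\Omega\Big(\nabla d\odot\nabla d-\tfrac{1}{2}(|\nabla d|^2+F(d))I\Big):\nabla u\,dx.$$
Testing $(\ref{NLC})_3$ with $\lambda(-\Delta d+f(d))$ and using the chain rule $\partial_t F(d)=f(d)\cdot\partial_t d$ produces
$$\tfrac{d}{dt}\int_\Omega\Big(\tfrac{\lambda}{2}|\nabla d|^2+\lambda F(d)\Big)dx+\lambda\nu\int_\Omega|\Delta d-f(d)|^2dx=\lambda\int_\Omega(u\cdot\nabla)d\cdot(\Delta d-f(d))\,dx.$$
Adding the two identities and invoking the Ericksen calculus $\nabla\cdot(\nabla d\odot\nabla d-\tfrac{1}{2}|\nabla d|^2 I)=(\Delta d)\cdot\nabla d$ together with $\nabla F(d)=f(d)\cdot\nabla d$ eliminates the $\Delta d$ part of the cross terms, and the residue reduces (after one more integration by parts with $u|_{\partial\Omega}=0$) to
$$\tfrac{d}{dt}E(t)+\mu\|\nabla u\|_{L^2}^2+\lambda\nu\|\Delta d-f(d)\|_{L^2}^2=\int_\Omega\Big(P(\rho)+\tfrac{\lambda}{2}F(d)\Big)\operatorname{div} u\,dx,$$
where $E(t):=\int_\Omega\big(\tfrac{1}{2}\rho|u|^2+\tfrac{\lambda}{2}|\nabla d|^2+\lambda F(d)\big)dx$.

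To close the estimate I would invoke (\ref{eq2.5}) to bound $\int_\Omega P\operatorname{div} u\le\tfrac{\mu}{2}\|\nabla u\|_{L^2}^2+C$, and, since $F\ge0$ for the Ginzburg--Landau potential, control the second piece by $C\|\operatorname{div} u\|_{L^\infty}E(t)$. The standing hypothesis (\ref{eq2.2}) makes $\|\operatorname{div} u\|_{L^\infty}$ integrable on $[0,T]$, so Gronwall's inequality delivers a uniform bound on $E(t)$, and a further integration in $t$ gives the bounds on both dissipation integrals, yielding (\ref{eq2.6}).

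The only non-routine point is the bookkeeping of the Ericksen--Leslie cancellation, which (because of the $\tfrac{1}{2}F(d)I$ in the stress) does not vanish outright but leaves the mild remainder $\tfrac{\lambda}{2}\int_\Omega F(d)\operatorname{div} u\,dx$; this is precisely where the assumption (\ref{eq2.2}) enters the proof, and it is the reason the lemma is stated as a conditional estimate rather than as an unconditional energy identity.
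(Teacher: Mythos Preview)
Your proof is correct and follows the same energy-balance strategy as the paper: test the momentum equation with $u$, the director equation with $\lambda(\Delta d-f(d))$, sum, and exploit the Ericksen cancellation. The only difference is that the paper's computation produces an \emph{exact} cancellation of all the $d$-coupled terms, so that only $\int_\Omega P\,\operatorname{div}u\,dx\le\varepsilon\|\nabla u\|_{L^2}^2+C\varepsilon^{-1}$ survives on the right of \eqref{eq2.9} and no Gronwall factor in $\|\operatorname{div}u\|_{L^\infty}$ is needed; your more literal reading of the $\tfrac12 F(d)I$ coefficient in the stress leaves the extra remainder $\tfrac{\lambda}{2}\int_\Omega F(d)\operatorname{div}u\,dx$, which you then correctly absorb into $E(t)$ and handle via Gronwall and \eqref{eq2.2}. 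Either route delivers \eqref{eq2.6}.
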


\begin{proof}
Multiplying the momentum equation $\eqref{NLC}_{2}$ by $u$,
integrating over $\Omega$ and making use of the mass conversation
equation $\eqref{NLC}_{1}$, it follows that
\begin{align}\label{eq2.7}
\frac{1}{2}\frac{d}{dt}\int_{\Omega}\rho|u|^{2}\text{d}x+\mu\int_{\Omega}|\nabla
u|^{2}\text{d}x=-\int_{\Omega} u\nabla
P\text{d}x-\lambda\int_{\Omega}(u\cdot\nabla)d\cdot(\Delta d
-f(d))\text{d}x,
\end{align}
where we have used the fact that $\operatorname{div}(\nabla d\odot
\nabla d)=(\nabla d)^{T}\Delta d-\nabla\cdot\frac{|\nabla
d|^{2}}{2}$. Multiplying the liquid crystal equation
$\eqref{NLC}_{3}$ by $\Delta d-f(d)$ and integrating over $\Omega$,
we obtain
\begin{align}\label{eq2.8}
\frac{d}{dt}\int_{\Omega}(\frac{1}{2}|\nabla
d|^{2}+F(d))\text{d}x+\nu \int_{\Omega}|\Delta
d-f(d)|^{2}\text{d}x=\int_{\Omega}(u\cdot \nabla)d\cdot(\Delta
d-f(d))\text{d}x.
\end{align}
Combining \eqref{eq2.7} and \eqref{eq2.8} together
\begin{align}\label{eq2.9}
&\frac{d}{dt}\int_{\Omega}[\frac{1}{2}(\rho |u|^{2}+\lambda |\nabla
d|^{2})+\lambda F(d)]\text{d}x+\mu\int_{\Omega }|\nabla
u|^{2}\text{d}x+\lambda\nu \int_{\Omega }|\Delta
d-f(d)|^{2}\text{d}x\nonumber\\
=& -\int_{\Omega } u\nabla
P\text{d}x=\int_{\Omega}P\operatorname{div}u\text{d}x\leq
\varepsilon\int_{\Omega }|\nabla u|^{2}\text{d}x+C \varepsilon^{-1},
\end{align}
where we have used the estimates \eqref{eq2.3}, \eqref{eq2.5} and
the Young inequality. Taking $\varepsilon$ small enough and applying
the Gronwall's inequality, we can establish the estimate
\eqref{eq2.6} immediately.
\end{proof}
\medskip

In the next lemma, we will derive some estimates of $d$.

\begin{lemma}\label{lem2.3}
Under the assumption \eqref{eq2.1}, it holds that for $0\leq T<
T^{*}$
\begin{align}
    \label{eq2.10}
&\sup_{0\leq t\leq T}(\|d\|_{L^{q}}+\|\nabla d\|_{L^{q}})\leq C\quad
\text{ for all } 2\leq q\leq \infty;\\
    \label{eq2.11}
&\sup_{0\leq t\leq T}\|\nabla d\|_{L^{2}}^{2}+\int_{0}^{T}\|
d_{t}\|_{L^{2}}^{2}\text{d}t\leq C;
\end{align}
\end{lemma}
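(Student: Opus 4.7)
The plan is to prove the two bounds in Lemma \ref{lem2.3} in the order they appear, extracting first an $L^\infty$ control of $d$ (which is essentially independent of the hypothesis \eqref{eq2.1}), then an $L^q$ control of $\nabla d$ (which crucially uses \eqref{eq2.1} via Gronwall), and finally feeding these into an energy identity for \eqref{eq2.11}. Throughout, the governing equation $\eqref{NLC}_{3}$ will be used in the form $d_t+(u\cdot\nabla)d=\nu\Delta d-\nu f(d)$, with the boundary condition $d|_{\partial\Omega}=d_0$ guaranteeing $d_t|_{\partial\Omega}=0$ and $|d|_{\partial\Omega}=1$.

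For the $L^\infty$ bound in \eqref{eq2.10}, the natural object is $\phi:=|d|^2-1$. Dotting $\eqref{NLC}_{3}$ with $2d$ and using $d\cdot f(d)=(|d|^2-1)|d|^2$ gives
\begin{equation*}
\partial_t\phi+(u\cdot\nabla)\phi-\nu\Delta\phi+2\nu|\nabla d|^2+2\nu\phi(\phi+1)=0.
\end{equation*}
Since $\phi|_{\partial\Omega}=0$ and $\phi(0,\cdot)\leq 0$ (a standard requirement on $d_0$), the maximum principle for this transport–diffusion equation with the dissipative zero-order term $2\nu(\phi+1)\phi$ and the nonpositive source $-2\nu|\nabla d|^2$ yields $\phi\leq 0$, hence $\|d\|_{L^\infty}\leq 1$. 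This disposes of the $\|d\|_{L^q}$ part of \eqref{eq2.10} for all $q$.

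For the $\|\nabla d\|_{L^q}$ part, I would apply $\nabla$ to $\eqref{NLC}_{3}$ and test against $|\nabla d|^{q-2}\nabla d$. The advective commutator produces a term bounded by $q\|\nabla u\|_{L^\infty}\|\nabla d\|_{L^q}^{q}$; the Laplacian term, after integration by parts, generates a nonnegative dissipation modulo boundary contributions on $\partial\Omega$ which are controlled since $d|_{\partial\Omega}=d_0$ is time-independent; and the source $\nabla f(d)=(|d|^2-1)\nabla d+2(d\cdot\nabla d)d$ satisfies $|\nabla f(d)|\leq C|\nabla d|$ by the already-established $\|d\|_{L^\infty}\leq 1$. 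Dividing by $q\|\nabla d\|_{L^q}^{q-1}$ leaves
\begin{equation*}
\frac{d}{dt}\|\nabla d\|_{L^q}\leq C\bigl(\|\nabla u\|_{L^\infty}+1\bigr)\|\nabla d\|_{L^q},
\end{equation*}
so Gronwall together with the hypothesis \eqref{eq2.1} yields a bound uniform in $q\in[2,\infty]$, completing \eqref{eq2.10}.

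For \eqref{eq2.11}, I would multiply $\eqref{NLC}_{3}$ by $d_t$ and integrate over $\Omega$; since $d_t|_{\partial\Omega}=0$, integration by parts gives
\begin{equation*}
\|d_t\|_{L^2}^2+\frac{\nu}{2}\frac{d}{dt}\|\nabla d\|_{L^2}^2+\nu\frac{d}{dt}\!\int_\Omega\! F(d)\,\text{d}x=-\int_\Omega (u\cdot\nabla)d\cdot d_t\,\text{d}x.
\end{equation*}
The right-hand side is bounded by $\|u\|_{L^3}\|\nabla d\|_{L^6}\|d_t\|_{L^2}\leq \tfrac12\|d_t\|_{L^2}^2+C\|u\|_{H^1}^2\|\nabla d\|_{L^6}^2$; using the Poincaré–Sobolev embedding $H_0^1\hookrightarrow L^3$, the $L^2_tH^1$ control of $u$ from \eqref{eq2.6}, and the uniform bound on $\|\nabla d\|_{L^6}$ from \eqref{eq2.10}, integration in time absorbs the $d_t$ term and produces \eqref{eq2.11} directly.

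The main obstacle I anticipate is the handling of the $L^q$ estimate for $\nabla d$ uniformly as $q\to\infty$: one must verify that the boundary contributions from integrating $\int|\nabla d|^{q-2}\nabla d\cdot\Delta\nabla d\,\text{d}x$ by parts are controlled (they are, because $d$ equals the time-independent $d_0$ on $\partial\Omega$, so all tangential derivatives of $\nabla d$ there are determined a priori), and that the constants in the resulting differential inequality remain $q$-independent so that the $L^\infty$ bound on $\nabla d$ can be obtained by letting $q\to\infty$. The rest of the argument is a routine energy bookkeeping built on Lemmas \ref{lem2.1} and \ref{lem2.2}.
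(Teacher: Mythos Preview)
Your overall architecture matches the paper's: energy/Moser estimates for $\|d\|_{L^q}$ and $\|\nabla d\|_{L^q}$ followed by Gronwall, then testing $\eqref{NLC}_3$ against $d_t$ for \eqref{eq2.11}. The $\nabla d$ estimate and the argument for \eqref{eq2.11} are essentially identical to the paper's (the paper uses the H\"older split $\|u\|_{L^2}\|\nabla d\|_{L^\infty}\|d_t\|_{L^2}$ while you use $\|u\|_{L^3}\|\nabla d\|_{L^6}\|d_t\|_{L^2}$, which is equivalent after Poincar\'e).

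The one substantive divergence is your treatment of $\|d\|_{L^\infty}$. The paper does \emph{not} use a maximum principle; it multiplies $\eqref{NLC}_3$ by $q|d|^{q-2}d$, obtains $\frac{d}{dt}\|d\|_{L^q}^q\le C(\|\nabla u\|_{L^\infty}+1)\|d\|_{L^q}^q$, applies Gronwall with \eqref{eq2.1}, and then lets $q\to\infty$. Your maximum-principle argument for $\phi=|d|^2-1$ is cleaner and yields the sharp bound $|d|\le 1$ without invoking \eqref{eq2.1}, but it rests on the hypothesis $|d_0|\le 1$ in $\Omega$, which the paper does \emph{not} assume (only $|d_0|=1$ on $\partial\Omega$ and $d_0\in H^3$ are stated). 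Under the paper's hypotheses as written, your step ``$\phi(0,\cdot)\le 0$'' is unjustified, so this is a genuine gap relative to the stated setting; the paper's Gronwall route avoids it. If you want to keep your approach, you must either add $|d_0|\le 1$ as an explicit assumption or replace that step by the paper's $L^q$ energy argument.

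Your caveat about boundary contributions in the $\|\nabla d\|_{L^q}$ estimate is well taken; the paper simply drops them without comment. Your justification (``tangential derivatives of $\nabla d$ on $\partial\Omega$ are determined'') is not quite complete either, since the normal derivative $\partial_n d$ on $\partial\Omega$ is not fixed by $d|_{\partial\Omega}=d_0$, but this is a shared technicality rather than a flaw peculiar to your argument.
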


\begin{proof}
We first multiplying the liquid crystal equation $\eqref{NLC}_{3}$
by $q|d|^{q-2}d$ with $q\geq 2$, and integrating over $\Omega$, then
there holds
\begin{align}
&\frac{d}{dt}\|d\|_{L^{q}}^{q}+\int_{\Omega}(q\nu |\nabla
d|^{2}|d|^{2}+q(q-2)\nu|d|^{q-2}|\nabla
|d||^{2})\text{d}x\nonumber\\
= &-\sum_{i=1}^{3}\int_{\Omega} u_{i}\partial_{i}
(|d|^{q})\text{d}x-q\nu\int_{\Omega }
|d|^{q+2}\text{d}x+q\nu\int_{\Omega}|d|^{q}\text{d}x\nonumber\\
= &-\sum_{i=1}^{3}\int_{\Omega} \partial_{i}u_{i}
|d|^{q}\text{d}x-q\nu\int_{\Omega }
|d|^{q+2}\text{d}x+q\nu\int_{\Omega}|d|^{q}\text{d}x\nonumber\\
\leq & C(\|\nabla u\|_{L^{\infty}}+1)\|d\|_{L^q}^{q}.\nonumber
\end{align}
By using the Gronwall's inequality, one obtains the inequality
\begin{align}\label{eq2.12}
\sup_{0\leq t\leq T}\|d\|_{L^{q}}\leq C\quad\text{ for all }q\geq 2.
\end{align}
By letting $q\rightarrow\infty$, we notice that the estimate
\eqref{eq2.12} still holds.

Multiplying the gradient of the liquid crystal equation
$\eqref{NLC}_{3}$ by $q|\nabla d|^{q-2}\nabla d$ with $q\geq 2$, and
integrating over $\Omega$, then there holds
\begin{align*}
&\frac{d}{dt}\|\nabla d\|_{L^{q}}^{q}+\int_{\Omega}(q\nu |\nabla
(\nabla d)|^{2}|\nabla d|^{q-2}+q(q-2)\nu |\nabla |\nabla
d||^{2}|\nabla d|^{q-2})\text{d}x\nonumber \\
= \!&\!-\!\sum_{i=1}^{3}\!\int_{\Omega}\!u_{i}\partial_{i}(|\nabla
d|^{q}\!)\text{d}x\!-\!\sum_{i=1}^{3}\!q\!\int_{\Omega}\!\nabla
u_{i}\partial_{i}d |\nabla d|^{q-2}\nabla d \text{d}x\!-\!\nu
q\!\int_{\Omega}\!\nabla(|d|^{2}d)|\nabla d|^{q-2}\nabla
d\text{d}x\!+\!\nu q\!
\int_{\Omega}\!|\nabla d|^{q}\text{d}x\nonumber\\
= \!&\!\!\sum_{i=1}^{3}\!\int_{\Omega}\partial_{i}u_{i}|\nabla
d|^{q}\text{d}x\!-\!\sum_{i=1}^{3}\!q\!\int_{\Omega }\!\nabla
u_{i}\partial_{i}d |\nabla d|^{q-2}\nabla d \text{d}x\!-\!\nu
q\!\int_{\Omega}\!\nabla(|d|^{2}d)|\nabla d|^{q-2}\nabla
d\text{d}x\!+\!\nu q\!
\int_{\Omega}\!|\nabla d|^{q}\text{d}x\nonumber\\
\leq& C\|\nabla u\|_{L^{\infty}}\|\nabla d\|_{L^{q}}^{q}+\nu
q\|\nabla d\|_{L^{q}}^{q}-\!\nu
q\!\int_{\Omega}\!\nabla(|d|^{2}d)|\nabla d|^{q-2}\nabla
d\text{d}x\nonumber\\
=&(C\|\nabla u\|_{L^{\infty}}+\nu q)\|\nabla d\|_{L^{q}}^{q}-\nu
q\int_{\Omega}|d|^{2}\nabla d|\nabla d|^{q-2}\nabla d\text{d}x-\nu
q\int_{\Omega}d\nabla (|d|^{2})|\nabla d|^{q-2}\nabla
d\text{d}x\nonumber\\
=&(C\|\nabla u\|_{L^{\infty}}+\nu q)\|\nabla d\|_{L^{q}}^{q}-3\nu
q\int_{\Omega}|d|^{2}|\nabla d|^{q}\text{d}x\nonumber\\
\leq &C(\|\nabla u\|_{L^{\infty}}+1)\|\nabla d\|_{L^{q}}^{q},
\end{align*}
where we have used the fact that $\nabla |d|^{2}=2|d|\nabla
|d|=2|d|\frac{d\cdot\nabla d}{|d|}=2d\nabla d$ in the last equality.
By using the Gronwall's inequality again, we obtain
\begin{align}\label{eq2.13}
\sup_{0\leq t\leq T}\|\nabla d\|_{L^{q}}\leq C\quad \text{ for all }
q\geq 2.
\end{align}
Letting $q\rightarrow\infty$, estimate \eqref{eq2.13} still holds,
and the inequalities \eqref{eq2.12} and \eqref{eq2.13} imply that
estimate \eqref{eq2.10} holds.

To prove the estimate \eqref{eq2.11}, we multiplying the liquid
crystal equation $\eqref{NLC}_{3}$ by $ d_{t}$ and integrating over
$\Omega$, then
\begin{align*}
&\|d_{t}\|_{L^{2}}^{2}+\frac{\nu}{2}\frac{d}{dt}\int_{\Omega}|\nabla
d|^{2}\text{d}x=-\int_{\Omega}(u\cdot \nabla)d
d_{t}\text{d}x-\nu\int_{\Omega}f(d)d_{t}\text{d}x\nonumber\\
\leq & C(\|u\|_{L^{2}}\|\nabla
d\|_{L^{\infty}}\|d_{t}\|_{L^{2}}+\|d\|_{L^{\infty}}^{2}
\|d\|_{L^{2}}\|d_{t}\|_{L^{2}}+\|d\|_{L^{2}}\|d_{t}\|_{L^{2}})\nonumber\\
\leq& \frac{1}{2}\|d_{t}\|_{L^{2}}^{2}+C,
\end{align*}
where we have used the estimates \eqref{eq2.6} and \eqref{eq2.10}.
Integrating the above inequality over $[0,T]$ gives the estimate
\eqref{eq2.11}.
\end{proof}
\medskip

For function $f\in \Omega\times (0,T)$, let
\begin{align*}
\dot{f}=f_{t}+u\cdot\nabla f
\end{align*}
denote the material derivative of the function $f$. Then we have
following lemma.

\begin{lemma}\label{lem2.4}
Under the assumption \eqref{eq2.1}, it holds that for $0\leq T<
T^{*}$
\begin{align}\label{eq2.14}
\sup_{0\leq t\leq T}(\|\nabla u\|_{L^{2}}^{2}+&\|\nabla
\rho\|_{L^{2}}^{2}+\|
d\|_{H^{2}}^{2})+\int_{0}^{T}\int_{\Omega}(\rho
|\dot{u}|^{2}+|\nabla d_{t}|^{2})\text{d}x\text{d}t\leq C;\\
       \label{eq2.15}
&\int_{0}^{T}\|\nabla d \|_{H^{2}}^{2}\text{d}t\leq C.
\end{align}

\end{lemma}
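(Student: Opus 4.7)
The plan is to derive a coupled system of differential inequalities for the three quantities $\|\nabla u\|_{L^{2}}^{2}$, $\|\nabla\rho\|_{L^{2}}^{2}$, $\|d\|_{H^{2}}^{2}$, together with the two time-integrated quantities $\int_{0}^{T}\int_{\Omega}\rho|\dot{u}|^{2}\,\text{d}x\text{d}t$ and $\int_{0}^{T}\|\nabla d_{t}\|_{L^{2}}^{2}\,\text{d}t$, and then to close everything via Gronwall, using $\int_{0}^{T}\|\nabla u\|_{L^{\infty}}\,\text{d}t\leq M$ from \eqref{eq2.1} together with the $L^{\infty}$-bounds on $\rho$, $d$ and $\nabla d$ provided by Lemmas \ref{lem2.1}--\ref{lem2.3}.

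For the velocity and the material-derivative dissipation, I would take the $L^{2}$ inner product of the momentum equation $\eqref{NLC}_{2}$ with $\dot{u}=u_{t}+u\cdot\nabla u$. The identity $\int\Delta u\cdot u_{t}\,\text{d}x=-\tfrac{1}{2}\tfrac{d}{dt}\|\nabla u\|_{L^{2}}^{2}$ produces the leading dissipation of $\|\nabla u\|_{L^{2}}^{2}$, while $\int\Delta u\cdot(u\cdot\nabla u)\,\text{d}x$ is controlled by $C\|\nabla u\|_{L^{\infty}}\|\nabla u\|_{L^{2}}^{2}$ after integration by parts. The pressure work $-\int\nabla P\cdot\dot{u}\,\text{d}x$ is rewritten by integrating the $u_{t}$-part by parts in space (to get $\tfrac{d}{dt}\int P\operatorname{div}u\,\text{d}x-\int P_{t}\operatorname{div}u\,\text{d}x$ with $P_{t}$ expanded via $\eqref{NLC}_{1}$), while the $(u\cdot\nabla u)$-part is estimated using $\|\nabla P\|_{L^{2}}\leq C\|\nabla\rho\|_{L^{2}}$ and Sobolev. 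The Ericksen stress contribution is rearranged using $\Delta d-f(d)=\nu^{-1}(d_{t}+u\cdot\nabla d)$ so that it only couples to $\|\nabla d\|_{L^{\infty}}$, $\|d_{t}\|_{L^{2}}$ and $\|\nabla u\|_{L^{2}}$. To close on $\|\nabla\rho\|_{L^{2}}$, I apply $\nabla$ to $\eqref{NLC}_{1}$ and test with $\nabla\rho$ to obtain $\tfrac{d}{dt}\|\nabla\rho\|_{L^{2}}^{2}\leq C(1+\|\nabla u\|_{L^{\infty}})\|\nabla\rho\|_{L^{2}}^{2}+C\|\nabla^{2}u\|_{L^{2}}\|\nabla\rho\|_{L^{2}}$, where $\|\nabla^{2}u\|_{L^{2}}$ is controlled by elliptic regularity applied to $\mu\Delta u=\rho\dot{u}+\nabla P+\lambda\nabla\cdot(\cdots)$, giving $\|\nabla^{2}u\|_{L^{2}}\leq C(\|\rho^{1/2}\dot{u}\|_{L^{2}}+\|\nabla\rho\|_{L^{2}}+\|\nabla d\|_{L^{\infty}}\|\Delta d\|_{L^{2}}+1)$.

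For the director field, I first multiply $\eqref{NLC}_{3}$ by $-\Delta d$ and integrate by parts, using $d_{t}|_{\partial\Omega}=0$ to handle the time-derivative term, which produces $\tfrac{1}{2}\tfrac{d}{dt}\|\nabla d\|_{L^{2}}^{2}+\nu\|\Delta d\|_{L^{2}}^{2}\leq C(1+\|\nabla u\|_{L^{2}}^{2})$ and hence $\int_{0}^{T}\|\Delta d\|_{L^{2}}^{2}\,\text{d}t\leq C$. To promote this to a sup-in-time bound on $\|d\|_{H^{2}}$ and extract the dissipation $\int_{0}^{T}\|\nabla d_{t}\|_{L^{2}}^{2}\,\text{d}t$, I differentiate $\eqref{NLC}_{3}$ in time, obtaining $d_{tt}-\nu\Delta d_{t}+u_{t}\cdot\nabla d+u\cdot\nabla d_{t}+\nu f'(d)d_{t}=0$, and pair with $d_{t}$. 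Together with an elliptic bound on $-\nu\Delta d=-d_{t}-u\cdot\nabla d-\nu f(d)$, this yields $\sup_{t}\|d\|_{H^{2}}^{2}+\int_{0}^{T}\|\nabla d_{t}\|_{L^{2}}^{2}\,\text{d}t\leq C$. Estimate \eqref{eq2.15} then follows from the same elliptic bound at one higher order, reducing $\|\nabla d\|_{H^{2}}^{2}$ to $\|\nabla d_{t}\|_{L^{2}}^{2}$ plus lower-order terms already controlled by \eqref{eq2.14}.

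The main obstacle will be the cross term $\int u_{t}\cdot\nabla d\cdot d_{t}\,\text{d}x$ appearing when I pair the time-differentiated $d$-equation with $d_{t}$: since the available estimate is only on $\|\rho^{1/2}\dot{u}\|_{L^{2}}$ and $\rho$ may vanish, $u_{t}$ cannot be bounded in $L^{2}$ directly. The resolution is to substitute $u_{t}=\dot{u}-u\cdot\nabla u$, use $\|\nabla d\|_{L^{\infty}}\leq C$ from Lemma \ref{lem2.3} and the Poincar\'e-Sobolev embedding $H^{1}_{0}\hookrightarrow L^{6}$ for $d_{t}$, and then absorb a small multiple of $\|\nabla d_{t}\|_{L^{2}}^{2}$ into the left-hand side while relegating the rest to the $\|\nabla^{2}u\|_{L^{2}}$-bound from the previous step. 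Once this coupling is arranged, summing all the inequalities and invoking Gronwall under the assumption $\int_{0}^{T}\|\nabla u\|_{L^{\infty}}\,\text{d}t\leq M$ closes the argument and simultaneously establishes both \eqref{eq2.14} and \eqref{eq2.15}.
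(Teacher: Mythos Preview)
Your overall architecture matches the paper's, but there is a genuine gap in how you propose to handle the cross terms involving $u_{t}$ (equivalently $\dot{u}$) when vacuum is allowed.

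In the momentum step, the Ericksen-stress term after testing with $\dot{u}$ is $-\lambda\int_{\Omega}\dot{u}\cdot(\nabla d)^{T}(\Delta d-f(d))\,\text{d}x$. Your rearrangement $\Delta d-f(d)=\nu^{-1}(d_{t}+u\cdot\nabla d)$ still leaves integrals of the form $\int_{\Omega}\dot{u}\cdot\nabla d\cdot d_{t}$ and $\int_{\Omega}\dot{u}\cdot\nabla d\cdot(u\cdot\nabla d)$, and the only dissipation available is $\|\rho^{1/2}\dot{u}\|_{L^{2}}^{2}$. Since $\rho$ may vanish, you cannot bound $\dot{u}$ in any unweighted $L^{p}$, so these terms are not controlled by ``$\|\nabla d\|_{L^{\infty}}$, $\|d_{t}\|_{L^{2}}$ and $\|\nabla u\|_{L^{2}}$'' as claimed. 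The paper's fix (see \eqref{eq2.20}) is to first integrate the $u_{t}$-part by parts \emph{in space}, obtaining $\int_{\Omega}\partial_{j}u_{it}\,\partial_{i}d\,\partial_{j}d$ etc., and then pull out a total time derivative, so that only $\nabla u$ paired with $\nabla d_{t}$ survives. The $(u\cdot\nabla)u$-part, by contrast, is kept in the form \eqref{eq2.28} and ultimately absorbed via the elliptic bound \eqref{eq2.29} on $\|\nabla^{2}u\|_{L^{2}}$.

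The same issue recurs in your director step. Differentiating $\eqref{NLC}_{3}$ in time and pairing with $d_{t}$ produces precisely $\int_{\Omega}u_{t}\cdot\nabla d\cdot d_{t}$, and the substitution $u_{t}=\dot{u}-u\cdot\nabla u$ again leaves an unweighted $\dot{u}$; ``relegating the rest to the $\|\nabla^{2}u\|_{L^{2}}$-bound'' does not apply to $\dot{u}$. The paper avoids this entirely by instead multiplying the \emph{undifferentiated} equation by $\Delta d_{t}$ (see \eqref{eq2.32}); then no $u_{t}$ ever appears, and the only delicate term $\int_{\Omega}u\cdot\nabla d\cdot\Delta d_{t}$ is integrated by parts to $\nabla u\cdot\nabla d\cdot\nabla d_{t}$ and $u\cdot\nabla^{2}d\cdot\nabla d_{t}$, both of which close via \eqref{eq2.31}--\eqref{eq2.33}. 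Replacing your time-differentiation step by this multiplication by $\Delta d_{t}$ (and using the space-IBP trick \eqref{eq2.20} for the Ericksen stress) will close the argument.
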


\begin{proof}
Noticing that the momentum equation $\eqref{NLC}_{2}$ can be rewrote
as
\begin{align}\label{eq2.16}
\rho \dot{u} +\nabla P=\mu\Delta u-\lambda (\nabla d)^{T}(\Delta
d-f(d)).
\end{align}
Multiplying the equation \eqref{eq2.16} above by $\dot{u}$ and
integrating over $\Omega$, one obtains the equality
\begin{align}\label{eq2.17}
&\frac{\mu}{2}\frac{d}{dt}\|\nabla u\|_{L^{2}}^{2}+\int_{\Omega
}\rho|\dot{u}|^{2}\text{d}x\nonumber\\
=&\mu\int_{\Omega} u\cdot\nabla u\Delta
u\text{d}x+\int_{\Omega}P\operatorname{div}u_{t}\text{d}x-\int_{\Omega}u\cdot\nabla
u\nabla
P\text{d}x\nonumber\\
&-\lambda\int_{\Omega}(u_{t}\cdot\nabla)d(\Delta
d-f(d))\text{d}x-\lambda\int_{\Omega}(u\cdot\nabla) u \cdot \nabla
d(\Delta d-f(d))\text{d}x
\end{align}
Combining the mass conservation equation $\eqref{NLC}_{1}$ and the
assumption \eqref{eq2.1}, it follows that the pressure $P$ satisfies
the following equation
\begin{align}\label{eq2.18}
P_{t}+P'(\rho)\nabla\rho\cdot u+P'(\rho)\rho\operatorname{div} u=0.
\end{align}
Hence, we have
\begin{align}\label{eq2.19}
\int_{\Omega}P\operatorname{div}u_{t}\text{d}x=&\frac{d}{dt}
\int_{\Omega}P\operatorname{div}u\text{d}x-\int_{\Omega}P_{t}\operatorname{div}u\text{d}x\nonumber\\
=&\frac{d}{dt} \int_{\Omega}P\operatorname{div}u\text{d}x+
\int_{\Omega}P'(\rho)(\nabla\rho \cdot
u+\rho\operatorname{div}u)\operatorname{div}u\text{d}x.
\end{align}
To estimate the term
$-\lambda\int_{\Omega}(u_{t}\cdot\nabla)d(\Delta d-f(d))\text{d}x$,
we have
\begin{align}\label{eq2.20}
-\!\lambda\int_{\Omega}\!(u_{t}\cdot\nabla)d&(\Delta
d-f(d))\text{d}x=\lambda\!\sum_{i,j=1}^{3}\!(\!\int_{\Omega}\!\partial_{j}u_{it}\partial_{i}d\partial_{j}d\text{d}x
+\!\int_{\Omega}\!u_{it}\partial_{i}\partial_{j}d\partial_{j}d\text{d}x)+\lambda
\int_{\Omega}u_{t}\cdot\nabla d f(d)\text{d}x\nonumber\\
=&\lambda\!\sum_{i,j=1}^{3}\!(\!\int_{\Omega}\!\partial_{j}u_{it}\partial_{i}d\partial_{j}d\text{d}x
-\frac{1}{2}\!\int_{\Omega}\!\partial_{i}u_{it}|\partial_{j}d|^{2}\text{d}x)-\lambda
\sum_{i=1}^{3}\int_{\Omega}\partial_{i}u_{it}(\frac{|d|^{4}}{4}-\frac{|d|^{2}}{2})\text{d}x\nonumber\\
=&\lambda\!\sum_{i,j=1}^{3}\{\frac{d}{dt}\!\int_{\Omega}\!(\partial_{j}u_{i}\partial_{i}d\partial_{j}d-\frac{1}{2}\partial_{i}u_{i}|\partial_{j}d|^{2})\text{d}x
-\int_{\Omega}\!\partial_{j}u_{i}\partial_{i}d_{t}\partial_{j}d\text{d}x-\int_{\Omega}\!\partial_{j}u_{i}\partial_{i}d\partial_{j}d_{t}\text{d}x\nonumber\\
&+\int_{\Omega}\partial_{i}u_{i}\partial_{j}d_{t}\partial_{j}d\text{d}x-\frac{d}{dt}\int_{\Omega}(\frac{\partial_{i}u_{i}
|d|^{4}}{4}-\frac{\partial_{i}u_{i}|d|^{2}}{2})\text{d}x+\int_{\Omega}\partial_{i}u_{i}(|d|^{3}d_{t}-|d|d_{t})\text{d}x\}\nonumber\\
\leq
&\lambda\sum_{i,j=1}^{3}\frac{d}{dt}\int_{\Omega}(\partial_{j}u_{i}\partial_{i}d\partial_{j}d
-\frac{1}{2}\partial_{i}u_{i}|\partial_{j}d|^{2}-\frac{1}{4}\partial_{i}u_{i}
|d|^{4}+\frac{1}{2}\partial_{i}u_{i}|d|^{2})\text{d}x\nonumber\\
&+C\|\nabla u\|_{L^{2}}\|\nabla d\|_{L^{\infty}}\|\nabla
d_{t}\|_{L^{2}}+C\|\nabla
u\|_{L^{2}}\|d_{t}\|_{L^{2}}(\|d\|_{L^{\infty}}^{2}+1)\|d\|_{L^{\infty}}\nonumber\\
\leq
&\lambda\sum_{i,j=1}^{3}\frac{d}{dt}\int_{\Omega}(\partial_{j}u_{i}\partial_{i}d\partial_{j}d
-\frac{1}{2}\partial_{i}u_{i}|\partial_{j}d|^{2}-\frac{1}{4}\partial_{i}u_{i}
|d|^{4}+\frac{1}{2}\partial_{i}u_{i}|d|^{2})\text{d}x\nonumber\\
&+C\varepsilon^{-1}\|\nabla u\|_{L^{2}}^{2}+\varepsilon\|\nabla
d_{t}\|_{L^{2}}^{2}+C\|\nabla
u\|_{L^{2}}^{2}+C\|d_{t}\|_{L^{2}}^{2},
\end{align}
where we have used estimate \eqref{eq2.10} in the last inequality.
Inserting \eqref{eq2.19} and \eqref{eq2.20} into \eqref{eq2.17}, and
integrating over $[0,T]$ give that
\begin{align}\label{eq2.21}
&\|\nabla u\|_{L^{2}}^{2}+\int_{0}^{T}\int_{\Omega}\rho
|\dot{u}|^{2}\text{d}x\text{d}t\nonumber\\
\leq & C+\!C\!\int_{0}^{T}\!\!\int_{\Omega}\!u\cdot\nabla u\Delta
u\text{d}x\text{d}t+C\!\int_{\Omega}\!P(\rho)\operatorname{div}u\text{d}x(T)+C\!\int_{0}^{T}\!\!\int_{\Omega}\!P'(\rho)(\nabla
\rho\cdot
u+\rho\operatorname{div}u)\operatorname{div}u\text{d}x\text{d}t\nonumber\\
&+C\sum_{i,j=1}^{3}\int_{\Omega}(\partial_{j}u_{i}\partial_{i}d\partial_{j}d
-\frac{1}{2}\partial_{i}u_{i}|\partial_{j}d|^{2}-\frac{1}{4}\partial_{i}u_{i}
|d|^{4}+\frac{1}{2}\partial_{i}u_{i}|d|^{2})\text{d}x(T)\nonumber\\
&+\varepsilon\int_{0}^{T}\|\nabla d_{t}\|_{L^{2}}^{2}\text{d}t
+C\varepsilon^{-1}\int_{0}^{T}\|\nabla
u\|_{L^{2}}^{2}\text{d}t+C\int_{0}^{T}
(\|\nabla u\|_{L^{2}}^{2}+\|d_{t}\|_{L^{2}}^{2})\text{d}t\nonumber\\
&+\int_{0}^{T}\int_{\Omega}|u||\nabla u||\nabla
P|\text{d}x\text{d}t+C\int_{0}^{T}\int_{\Omega}|u||\nabla u| |\nabla
d|(|\Delta d|+|f(d)|)\text{d}x\text{d}t\nonumber\\
\leq & C+\varepsilon\int_{0}^{T}\|\nabla
d_{t}\|_{L^{2}}^{2}\text{d}t+C\int_{0}^{T}\int_{\Omega}u\cdot\nabla
u\Delta
u\text{d}x\text{d}t+C\int_{\Omega}P(\rho)\operatorname{div}u\text{d}x(T)\nonumber\\
&+C\int_{0}^{T}\int_{\Omega}P'(\rho)(\nabla \rho\cdot
u+\rho\operatorname{div}u)\operatorname{div}u\text{d}x\text{d}t\nonumber\\
&+C\sum_{i,j=1}^{3}\int_{\Omega}(\partial_{j}u_{i}\partial_{i}d\partial_{j}d
-\frac{1}{2}\partial_{i}u_{i}|\partial_{j}d|^{2}-\frac{1}{4}\partial_{i}u_{i}
|d|^{4}+\frac{1}{2}\partial_{i}u_{i}|d|^{2})\text{d}x(T)\nonumber\\
&+\int_{0}^{T}\int_{\Omega}|u||\nabla u||\nabla
P|\text{d}x\text{d}t+C\int_{0}^{T}\int_{\Omega}|u||\nabla u| |\nabla
d|(|\Delta d|+|f(d)|)\text{d}x\text{d}t,
\end{align}
where we have used the estimate \eqref{eq2.6} and \eqref{eq2.11}. To
estimate the terms on the right side of \eqref{eq2.21}, by using
Lemma \ref{lem2.1}, the estimates \eqref{eq2.6}, \eqref{eq2.10} and
\eqref{eq2.11}, we get
\begin{align}
    \label{eq2.22}
&\int_{0}^{T}\int_{\Omega}u\cdot\nabla u\Delta
u\text{d}x\text{d}t=\sum_{i,j=1}^{3}\int_{0}^{T}\int_{\Omega}
(-\partial_{j}u_{i}\partial_{i}u\partial_{j}u-u_{i}\partial_{i}\partial_{j}u\partial_{j}u)\text{d}x\text{d}t\nonumber\\
&\quad\quad\quad\quad\quad\quad\quad\quad
=\sum_{i,j=1}^{3}\int_{0}^{T}\int_{\Omega}
(-\partial_{j}u_{i}\partial_{i}u\partial_{j}u+\frac{1}{2}\partial_{i}u_{i}|\partial_{j}u|^{2})\text{d}x\text{d}t\nonumber\\
&\quad\quad\quad\quad\quad\quad\quad\quad \leq C\int_{0}^{T}\|\nabla u\|_{L^{3}}^{3}\text{d}t\leq C
\int_{0}^{T}\|\nabla u\|_{L^{\infty}}\|\nabla u\|_{L^{2}}^{2}\text{d}t\\
    \label{eq2.23}
&\int_{\Omega}P(\rho)\operatorname{div}u\text{d}x(T)\leq
\frac{1}{4}\|\nabla
u\|_{L^{2}}^{2}+C\int_{\Omega}|P(\rho)|^{2}\text{d}x\leq\frac{1}{4}\|\nabla
u\|_{L^{2}}^{2}+C;\\
     \label{eq2.24}
&\int_{0}^{T}\int_{\Omega}P'(\rho)(\nabla \rho\cdot
u)\operatorname{div} u\text{d}x\text{d}t\leq
C\int_{0}^{T}\|\nabla\rho\|_{L^{2}}\|u\|_{L^{2}}\|\operatorname{div}u\|_{L^{\infty}}\text{d}t\nonumber\\
&\quad\quad\quad\quad\quad\quad\quad\quad \leq C\int_{0}^{T}\|\nabla
\rho\|_{L^{2}}^{2}\|\nabla
u\|_{L^{\infty}}\text{d}t+C\int_{0}^{T}\|u\|_{L^{2}}^{2}\|\nabla
u\|_{L^{\infty}}\text{d}t\nonumber\\
&\quad\quad\quad\quad\quad\quad\quad\quad \leq C\int_{0}^{T}\|\nabla
\rho\|_{L^{2}}^{2}\|\nabla
u\|_{L^{\infty}}\text{d}t+C\int_{0}^{T}\|\nabla u\|_{L^{\infty}}\text{d}t\nonumber\\
&\quad\quad\quad\quad\quad\quad\quad\quad \leq C\int_{0}^{T}\|\nabla
\rho\|_{L^{2}}^{2}\|\nabla
u\|_{L^{\infty}}\text{d}t+C;\\
      \label{eq2.25}
&\int_{0}^{T}\int_{\Omega}P'(\rho)\rho |\operatorname{div}
u|^{2}\text{d}x\text{d}\leq
C\int_{0}^{T}\|\rho\|_{L^{\infty}}\|\nabla u\|_{L^{2}}\text{d}t\leq
C\int_{0}^{T}\|\nabla
u\|_{L^{2}}^{2}\text{d}t\leq C;\\
      \label{eq2.26}
&\sum_{i,j=1}^{3}\int_{\Omega}(\partial_{j}u_{i}\partial_{i}d\partial_{j}d
-\frac{1}{2}\partial_{i}u_{i}|\partial_{j}d|^{2}-\frac{1}{4}\partial_{i}u_{i}
|d|^{4}+\frac{1}{2}\partial_{i}u_{i}|d|^{2})\text{d}x(T)\nonumber\\
&\quad\quad\quad\quad\quad\quad\quad\quad \leq C(\|\nabla
u\|_{L^{2}}\|\nabla d\|_{L^{2}}
\|\nabla d\|_{L^{\infty}}+\|\nabla u\|_{L^{2}}\|d\|_{L^{2}}(\|d\|_{L^{\infty}}^{3}+\|d\|_{L^{\infty}}))\nonumber\\
&\quad\quad\quad\quad\quad\quad\quad\quad \leq \frac{1}{4}\|\nabla
u\|_{L^{2}}^{2}+C\|\nabla d\|_{L^{2}}^{2}\|\nabla
d\|_{L^{\infty}}^{2}+C\|d\|_{L^{2}}^{2}(\|d\|_{L^{\infty}}^{6}+\|d\|_{L^{\infty}}^{2})\nonumber\\
&\quad\quad\quad\quad\quad\quad\quad\quad \leq \frac{1}{4}\|\nabla
u\|_{L^{2}}^{2}+C\\
      \label{eq2.27}
&\int_{0}^{T}\int_{\Omega}|u||\nabla u||\nabla
P|\text{d}x\text{d}t\leq C\int_{0}^{T}\int_{\Omega}|u||\nabla
u||\nabla\rho|\text{d}x\text{d}t\nonumber\\
&\quad\quad\quad\quad\quad\quad \leq
C\int_{0}^{T}\|u\|_{L^{6}}\|\nabla u\|_{L^{3}}\|\nabla \rho\|_{L^{2}}\text{d}t
\leq C\int_{0}^{T}\|\nabla u\|_{L^{2}}^{\frac{5}{3}}\|\nabla u\|_{L^{\infty}}^{\frac{1}{3}}\|\nabla \rho\|_{L^{2}}\text{d}t\nonumber\\
&\quad\quad\quad\quad\quad\quad \leq C \int_{0}^{T}(\|\nabla
u\|_{L^{2}}^{2}\|\nabla\rho\|_{L^{2}}^{2}+
\|\nabla u\|_{L^{\infty}}^{\frac{2}{3}}\|\nabla u\|_{L^{2}}^{\frac{4}{3}})\text{d}t\nonumber\\
&\quad\quad\quad\quad\quad\quad \leq C\int_{0}^{T}(\|\nabla
u\|_{L^{2}}^{2}\|\nabla\rho\|_{L^{2}}^{2}+
\|\nabla u\|_{L^{\infty}}\|\nabla u\|_{L^{2}}^{2}+C)\text{d}t;\\
      \label{eq2.28}
&\int_{0}^{T}\int_{\Omega}|u||\nabla u| |\nabla d|(|\Delta
d|+|f(d)|)\text{d}x\text{d}t\leq C\int_{0}^{T}(\|u\|_{L^{6}}\|\nabla
u\|_{L^{6}}\|\nabla d\|_{L^{6}}\|\Delta d\|_{L^{2}}\nonumber\\
&\quad\quad\quad\quad\quad\quad+\|u\|_{L^{6}}\|\nabla u\|_{L^{2}}\|\nabla d\|_{L^{3}}(\|d\|_{L^{\infty}}^{3}+\|d\|_{L^{\infty}}))\text{d}t\nonumber\\
&\quad\quad\quad\quad\quad\quad \leq C\int_{0}^{T}(\|\nabla
u\|_{L^{2}}\|\nabla^{2} u\|_{L^{2}}\|\Delta d\|_{L^{2}}+\|\nabla
u\|_{L^{2}}^{2})\text{d}t\nonumber\\
&\quad\quad\quad\quad\quad\quad \leq \int_{0}^{T}(\varepsilon
\|\nabla^{2}u\|_{L^{2}}^{2}+C\varepsilon^{-1}\|\nabla
u\|_{L^{2}}^{2}\|\Delta d\|_{L^{2}}^{2})\text{d}t+C.
\end{align}
By using the stand elliptic regularity result to \eqref{eq2.16}, we
have
\begin{align}\label{eq2.29}
\|\nabla^{2}u\|_{L^{2}}^{2}\leq& \|\Delta u\|_{L^{2}}^{2}+\|\nabla
u\|_{L^{2}}^{2}\leq C(\|\nabla u\|_{L^{2}}^{2}+\|\rho
\dot{u}\|_{L^{2}}^{2}+\|\nabla P\|_{L^{2}}^{2}+\|(\nabla
d)^{T}(\Delta d-f(d))\|_{L^{2}}^{2})\nonumber\\
\leq& C(\|\nabla
u\|_{L^{2}}^{2}+\|\rho\|_{L^{\infty}}^{\frac{1}{2}}\|\rho^{\frac{1}{2}}
\dot{u}\|_{L^{2}}^{2}+\|\nabla \rho\|_{L^{2}}^{2}+\|\nabla
d\|_{L^{\infty}}^{2}(\|\Delta
d\|_{L^{2}}^{2}+\|f(d)\|_{L^{2}}^{2}))\nonumber\\
\leq& C(\|\nabla u\|_{L^{2}}^{2}+\|\rho^{\frac{1}{2}}
\dot{u}\|_{L^{2}}^{2}+\|\nabla \rho\|_{L^{2}}^{2}+\|\Delta
d\|_{L^{2}}^{2}+1).
\end{align}
Combining estimates \eqref{eq2.21}--\eqref{eq2.29} and taking
$\varepsilon$ small enough, we can get
\begin{align}\label{eq2.30}
&\|\nabla u\|_{L^{2}}^{2}+\int_{0}^{T}\int_{\Omega}\rho
|\dot{u}|^{2}\text{d}x\text{d}t\nonumber\\
\leq & C+\varepsilon\!\int_{0}^{T}\!\!\|\nabla
d_{t}\|_{L^{2}}^{2}\text{d}t+C\!\int_{0}^{T}\!\!(\|\nabla
\rho\|_{L^{2}}^{2}+\!\|\nabla u\|_{L^{2}}^{2}+\!\|\Delta
d\|_{L^{2}}^{2})(\|\nabla u\|_{L^{\infty}}+\!\|\nabla
u\|_{L^{2}}^{2}+1)\text{d}t.
\end{align}

To estimate the orientation parameter $d$, by the standard elliptic
regularity result to the liquid crystal equation $\eqref{NLC}_{3}$,
one obtains that
\begin{align}\label{eq2.31}
\|\nabla^{3} d\|_{L^{2}}\leq& C(\|\nabla
d_{t}\|_{L^{2}}+\|\nabla(u\cdot \nabla d)\|_{L^{2}}+\|\nabla
f(d)\|_{L^{2}}+\|d_{0}\|_{H^{3}})\nonumber\\
\leq & C(\|\nabla d_{t}\|_{L^{2}}+\|\nabla u\|_{L^{2}}\|\nabla
d\|_{L^{\infty}}+\||u||\nabla^{2}d|\|_{L^{2}}+\|\nabla d\|_{L^{2}}
(\|d\|_{L^{\infty}}^{2}\!+\!\|d\|_{L^{\infty}}\!)+\|d_{0}\|_{H^{3}})\nonumber\\
\leq & C(\|\nabla d_{t}\|_{L^{2}}+\|\nabla
u\|_{L^{2}}+\||u||\nabla^{2}d|\|_{L^{2}}+C)
\end{align}
Multiplying the liquid crystal equation $\eqref{NLC}_{3}$ by $\Delta
d_{t}$, and integrating over $\Omega$, then we have
\begin{align}\label{eq2.32}
&\frac{d}{dt}\int_{\Omega}\nu|\Delta
d|^{2}\text{d}x+\int_{\Omega}|\nabla d_{t}|^{2}\text{d}x\nonumber\\
=&\int_{\Omega} u\cdot \nabla d\Delta
d_{t}\text{d}x+\nu\int_{\Omega}(|d|^{2}-1)d\Delta
d_{t}\text{d}x\nonumber\\
=&\sum_{i,j=1}^{3}\int_{\Omega}u_{i}\partial_{i}d\partial_{j}^{2}d_{t}\text{d}x-\nu\int_{\Omega}\nabla(|d|^{2}d)\nabla
d_{t}\text{d}x+\nu\int_{\Omega}\nabla d\nabla d_{t}\text{d}x\nonumber\\
=&-\sum_{i,j=1}^{3}\int_{\Omega}\partial_{j}u_{i}\partial_{i}d\partial_{j}d_{t}\text{d}x-\sum_{i,j=1}^{3}\int_{\Omega}
u_{i}\partial_{i}\partial_{j}d\partial_{j}d_{t}\text{d}x-\nu\int_{\Omega}\nabla(|d|^{2}d)\nabla
d_{t}\text{d}x+\nu\int_{\Omega}\nabla d\nabla d_{t}\text{d}x\nonumber\\
\leq & C(\|\nabla u\|_{L^{2}}\!\|\nabla d\|_{L^{\infty}}\!\|\nabla
d_{t}\|_{L^{2}}\!\nonumber\\
&+\|u\nabla^{2} d\|_{L^{2}}\!\|\nabla
d_{t}\|_{L^{2}}\!+\|d\|_{L^{\infty}}^{2}\!\|\nabla
d\|_{L^{2}}\!\|\nabla d_{t}\|_{L^{2}}\!+\|\nabla
d\|_{L^{2}}\!\|\nabla
d_{t}\|_{L^{2}})\nonumber\\
\leq& \varepsilon\|\nabla
d_{t}\|_{L^{2}}^{2}+C\varepsilon^{-1}(\|\nabla
u\|_{L^{2}}^{2}+\int_{\Omega}|u|^{2}|\nabla^{2} d|^{2}\text{d}x+1),
\end{align}
where we have used the H\"{o}lder inequality and estimates
\eqref{eq2.10}. For the term $\int_{\Omega}|u|^{2}|\nabla^{2}
d|^{2}\text{d}x$, applying estimate \eqref{eq2.31}, we have for
$\eta>0$
\begin{align*}
\int_{\Omega}|u|^{2}|\nabla^{2} d|^{2}\text{d}x&\leq C
\|u\|_{L^{6}}^{2}\|\nabla^{2} d\|_{L^{3}}^{2}\leq C\|\nabla
u\|_{L^{2}}^{2}\|\nabla
d\|_{L^{6}}\|\nabla^{3}d\|_{L^{2}}\nonumber\\
\leq& \eta\|\nabla^{3}d\|_{L^{2}}^{2}+C\eta^{-1}\|\nabla
u\|_{L^{2}}^{4}\nonumber\\
\leq& \eta\|\nabla
d_{t}\|_{L^{2}}^{2}+\eta\int_{\Omega}|u|^{2}|\nabla^{2}d|^{2}\text{d}x+\eta\|\nabla
u\|_{L^{2}}^{2}+C\eta^{-1}(\|\nabla u\|_{L^{2}}^{4}+C).
\end{align*}
Hence, taking $\eta$ small enough
\begin{align}\label{eq2.33}
\int_{\Omega}|u|^{2}|\nabla^{2} d|^{2}\text{d}x\leq& 2\eta\|\nabla
d_{t}\|_{L^{2}}^{2}+C\eta^{-1}(\|\nabla u\|_{L^{2}}^{2}(\|\nabla
u\|_{L^{2}}^{2}+1)+C).
\end{align}
Inserting \eqref{eq2.33} into \eqref{eq2.32}, taking $\varepsilon$,
$\eta$ small enough and integrating above inequality over $(0;T]$
ensure that
\begin{align}\label{eq2.34}
\|\Delta d\|_{L^{2}}^{2}+\int_{0}^{T}\int_{\Omega}|\nabla
d_{t}|^{2}\text{d}x\text{d}t \leq C\int_{0}^{T}\|\nabla
u\|_{L^{2}}^{2}(\|\nabla u\|_{L^{2}}^{2}+1)\text{d}t+C.
\end{align}

Now, we will estimate the density $\rho$. Applying the operator
$\nabla$ to the mass conservation equation $\eqref{NLC}_{1}$, then
multiplying it by $\nabla \rho$ and integrating over $\Omega$ yield
\begin{align*}
\frac{d}{dt}\|\nabla\rho\|_{L^{2}}^{2}=&-\int_{\Omega}|\nabla
\rho|^{2}\operatorname{div}u\text{d}x-2\int_{\Omega}\rho\nabla\rho\nabla\operatorname{div}u\text{d}x
-2\int_{\Omega}(\nabla \rho\cdot\nabla u)\nabla
\rho\text{d}x\nonumber\\
\leq& C\|\nabla\rho\|_{L^{2}}^{2}\|\nabla u\|_{L^{\infty}}+C\|\nabla
\rho\|_{L^{2}}\|\nabla\operatorname{div}u\|_{L^{2}}\nonumber\\
\leq& \varepsilon \|\nabla^{2}
u\|_{L^{2}}^{2}+C\varepsilon^{-1}\|\nabla\rho\|_{L^{2}}^{2}(\|\nabla
u\|_{L^{\infty}}+1) \nonumber\\
\leq& \varepsilon (\|\rho^{\frac{1}{2}}
\dot{u}\|_{L^{2}}^{2}+\|\nabla
u\|_{L^{2}}^{2}+\|\nabla\rho\|_{L^{2}}^{2}+\|\Delta
d\|_{L^{2}}^{2}+1)+C\varepsilon^{-1}\|\nabla\rho\|_{L^{2}}^{2}(\|\nabla
u\|_{L^{\infty}}+1)\nonumber\\
\leq& \varepsilon \|\rho^{\frac{1}{2}}
\dot{u}\|_{L^{2}}^{2}+C\varepsilon^{-1}\|\nabla\rho\|_{L^{2}}^{2}(\|\nabla
u\|_{L^{\infty}}+1)+C(\|\nabla u\|_{L^{2}}^{2}+\|\Delta
d\|_{L^{2}}^{2}),
\end{align*}
where we have used the estimate \eqref{eq2.29} in the above
inequality. Integrating the above estimate over $(0,T]$ gives that
\begin{align}\label{eq2.35}
\|\nabla \rho\|_{L^{2}}^{2}\leq
\varepsilon\int_{0}^{T}\|\rho^{\frac{1}{2}}\dot{u}\|_{L^{2}}^{2}\text{d}t+\int_{0}^{T}(C\varepsilon^{-1}\|\nabla\rho\|_{L^{2}}^{2}(\|\nabla
u\|_{L^{\infty}}+1)+C(\|\nabla u\|_{L^{2}}^{2}+\|\Delta
d\|_{L^{2}}^{2}))\text{d}t
\end{align}
Combining estimates \eqref{eq2.30}, \eqref{eq2.34} and
\eqref{eq2.35}, and taking $\varepsilon$ small enough, one obtains
that
\begin{align}\label{eq2.36}
\|\nabla u\|_{L^{2}}^{2}&+\|\nabla\rho\|_{L^{2}}^{2}+\|\Delta
d\|_{L^{2}}^{2}+\int_{0}^{T}\int_{\Omega}(\rho|\dot{u}|^{2}+|\nabla
d_{t}|^{2})\text{d}x\text{d}t\nonumber\\
\leq &C+C\int_{0}^{T}(\|\nabla \rho\|_{L^{2}}^{2}+\|\nabla
u\|_{L^{2}}^{2}+\|\Delta d\|_{L^{2}}^{2})(\|\nabla
u\|_{L^{\infty}}+\|\nabla u\|_{L^{2}}^{2}+1)\text{d}t.
\end{align}
Since the energy estimate \eqref{eq2.6} implies that
$\int_{0}^{T}\|\nabla u\|_{L^{2}}^{2}\text{d}t\leq C$. By using the
Gronwall's inequality, the elliptic regularity result
$\|\nabla^{2}d\|_{L^{2}}\leq C(\|\Delta
d\|_{L^{2}}+\|d_{0}\|_{H^{2}})$ and noticing that the assumption
\eqref{eq2.1}, we deduce that the inequality \eqref{eq2.14} holds.

To prove the estimate \eqref{eq2.15}, by using the standard elliptic
regularity result on $\eqref{NLC}_{3}$, we have
\begin{align}\label{eq2.37}
\|\nabla^{2} d\|_{L^{3}}^{2}\leq& C(\|
d_{t}\|_{L^{3}}^{2}+\|u\cdot\nabla
d\|_{L^{3}}^{2}+\|f(d)\|_{L^{3}}^{2}+\|d_{0}\|_{H^{3}}^{2})\nonumber\\
\leq & C(\|d_{t}\|_{L^{2}}\|\nabla
d_{t}\|_{L^{2}}+\|u\|_{L^{6}}^{2}\|\nabla
d\|_{L^{6}}^{2}+\|d\|_{L^{3}}^{2}(\|d\|_{L^{\infty}}^{4}+\|d\|_{L^{2}}^{2})+C)\nonumber\\
\leq& C(\|d_{t}\|_{L^{2}}^{2}+\|\nabla d_{t}\|_{L^{2}}^{2}+\|\nabla
u\|_{L^{2}}^{2}+C)\nonumber\\
\leq& C(\|d_{t}\|_{L^{2}}^{2}+\|\nabla d_{t}\|_{L^{2}}^{2}+C),
\end{align}
where we have used the estimate \eqref{eq2.14} in the last
inequality. Then by using the estimates \eqref{eq2.10},
\eqref{eq2.11}, \eqref{eq2.14} and the above inequality, we have
\begin{align*}
&\int_{0}^{T}\|\nabla d\|_{H^{2}}^{2}\text{d}t\leq
\int_{0}^{T}(\|\nabla^{3} d\|_{L^{2}}^{2}+\|\nabla
d\|_{L^{2}}^{2})\text{d}t\nonumber\\
\leq& C\int_{0}^{T}(\|\nabla d_{t}\|_{L^{2}}^{2}+\|\nabla
(u\cdot\nabla d)\|_{L^{2}}^{2}+\|\nabla
f(d)\|_{L^{2}}^{2}+C)\text{d}t\nonumber\\
\leq & C\int_{0}^{T}(\|\nabla d_{t}\|_{L^{2}}^{2}+\|\nabla
u\|_{L^{2}}^{2}\|\nabla
d\|_{L^{\infty}}^{2}+\|u\|_{L^{6}}^{2}\|\nabla^{2}
d\|_{L^{3}}^{2}+\|\nabla
d\|_{L^{2}}^{2}(\|d\|_{L^{\infty}}^{4}+\|d\|_{L^{\infty}}^{2})+C)\text{d}t\nonumber\\
\leq& C\int_{0}^{T}(\|\nabla d_{t}\|_{L^{2}}^{2}+\|\nabla^{2}
d\|_{L^{2}}^{2}+C)\text{d}t\nonumber\\
\leq& C\int_{0}^{T}(\|\nabla d_{t}\|_{L^{2}}^{2}+\|
d_{t}\|_{L^{2}}^{2}+C)\text{d}t\leq C.
\end{align*}
This completes the proof of Lemma \ref{lem2.4}.
\end{proof}

\begin{lemma}\label{lem2.5}
Under the assumption \eqref{eq2.1}, it holds that for $0\leq T<
T^{*}$
\begin{align}\label{eq2.38}
&\sup_{0\leq t\leq
T}(\|\rho^{\frac{1}{2}}u_{t}\|_{L^{2}}^{2}+\|\nabla
d_{t}\|_{L^{2}}^{2})+\int_{0}^{T}(\|\nabla
u_{t}\|_{L^{2}}^{2}+\|(\Delta
d-f(d))_{t}\|_{L^{2}}^{2})\text{d}t\leq C.
\end{align}
\end{lemma}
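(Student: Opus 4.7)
\textbf{Proof sketch for Lemma \ref{lem2.5}.}

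The plan is to derive two coupled higher-order energy identities: one for $\sqrt{\rho}\,u_t$ obtained by time-differentiating the momentum equation and testing against $u_t$, and one for $\nabla d_t$ obtained by time-differentiating the director equation and testing against $-\Delta d_t$. Adding them and using the bounds already available from Lemmas \ref{lem2.1}--\ref{lem2.4}, the estimate will close by Gronwall's inequality. Throughout, the tools are $\rho_t=-\operatorname{div}(\rho u)$, integration by parts, Hölder, the embedding $\|v\|_{L^6}\le C\|\nabla v\|_{L^2}$ for $v\in H^1_0(\Omega)$, and Young's inequality.

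\emph{Velocity estimate.} Rewrite $\eqref{NLC}_2$ in the non-conservative form $\rho u_t+\rho(u\cdot\nabla)u+\nabla P=\mu\Delta u-\lambda(\nabla d)^T(\Delta d-f(d))$, differentiate in $t$, multiply by $u_t$, and integrate. The standard manipulation using $\rho_t=-\operatorname{div}(\rho u)$ gives
\begin{align*}
\tfrac{1}{2}\tfrac{d}{dt}\int_\Omega\rho|u_t|^2\,dx+\mu\int_\Omega|\nabla u_t|^2\,dx=\mathcal{R}_1(t),
\end{align*}
where $\mathcal{R}_1$ collects the convective contributions (involving $\rho(u\cdot\nabla)u$ and the $\rho_t u_t$ rearrangement), the pressure term $\int P_t\operatorname{div}u_t$ handled by \eqref{eq2.18}, and the time derivative of the liquid-crystal stress. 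Each piece will be controlled using $\|\rho\|_{L^\infty}$ and $\|P'(\rho)\|_{L^\infty}$ from \eqref{eq2.3} and \eqref{eq2.5}, the $L^\infty$ bounds on $d$ and $\nabla d$ from Lemma \ref{lem2.3}, the uniform control on $\|\nabla u\|_{L^2}$, $\|\nabla\rho\|_{L^2}$, and $\|d\|_{H^2}$ from Lemma \ref{lem2.4}, and Young to absorb small multiples of $\|\nabla u_t\|_{L^2}^2$ into the left.

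\emph{Director estimate.} Differentiating $\eqref{NLC}_3$ gives $d_{tt}+(u_t\cdot\nabla)d+(u\cdot\nabla)d_t=\nu\Delta d_t-\nu f'(d)d_t$. Testing against $-\Delta d_t$ (permissible since $d_t=0$ on $\partial\Omega$) and integrating by parts in space produces
\begin{align*}
\tfrac{1}{2}\tfrac{d}{dt}\|\nabla d_t\|_{L^2}^2+\nu\|\Delta d_t\|_{L^2}^2=\int_\Omega\bigl[(u_t\cdot\nabla)d+(u\cdot\nabla)d_t+\nu f'(d)d_t\bigr]\Delta d_t\,dx.
\end{align*}
Adding this to the velocity identity, the critical coupling is the cross term $\int(u_t\cdot\nabla)d\cdot\Delta d_t\,dx$, which I would bound by
\begin{align*}
C\|u_t\|_{L^6}\|\nabla d\|_{L^3}\|\Delta d_t\|_{L^2}\le \varepsilon\|\nabla u_t\|_{L^2}^2+\varepsilon\|\Delta d_t\|_{L^2}^2+C\|\nabla d\|_{L^3}^2,
\end{align*}
with $\|\nabla d\|_{L^3}\le C$ from Lemma \ref{lem2.3}. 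The momentum-side analogue $\int u_t\cdot\partial_t\bigl[(\nabla d)^T(\Delta d-f(d))\bigr]dx$ is treated by first moving one spatial derivative off using integration by parts and then applying the same kind of splitting.

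\emph{Closure.} Once the $\varepsilon$-terms are absorbed into the left, the combined inequality takes the form
\begin{align*}
\tfrac{d}{dt}\bigl(\|\rho^{1/2}u_t\|_{L^2}^2+\|\nabla d_t\|_{L^2}^2\bigr)+\|\nabla u_t\|_{L^2}^2+\|\Delta d_t\|_{L^2}^2\le C\Phi(t)\bigl(\|\rho^{1/2}u_t\|_{L^2}^2+\|\nabla d_t\|_{L^2}^2+1\bigr),
\end{align*}
with $\Phi\in L^1(0,T)$ assembled from norms already controlled by Lemma \ref{lem2.4} (in particular $\|\nabla u\|_{L^\infty}$ on account of assumption \eqref{eq2.1}). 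Gronwall, the compatibility condition \eqref{eq1.7} which bounds $\|\sqrt{\rho}u_t(0)\|_{L^2}$, and the pointwise identity $(\Delta d-f(d))_t=\Delta d_t-f'(d)d_t$ together with $\|f'(d)d_t\|_{L^2}\le C\|d_t\|_{L^2}\le C\|\nabla d_t\|_{L^2}$ then yield \eqref{eq2.38}. The principal obstacle is precisely the coupling term $\int(u_t\cdot\nabla)d\cdot\Delta d_t$: it straddles the two dissipations and forces us to carry out the velocity and director estimates jointly rather than sequentially; the fact that this is feasible at all depends crucially on the $L^\infty$-in-space bound on $\nabla d$ delivered by Lemma \ref{lem2.3}, since a weaker bound would leave an unabsorbable factor multiplying $\|\Delta d_t\|_{L^2}$.
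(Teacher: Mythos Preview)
Your sketch is correct and follows essentially the same strategy as the paper: differentiate both equations in time, derive coupled energy identities for $\sqrt{\rho}\,u_t$ and $\nabla d_t$, and close by Gronwall using the $L^1(0,T)$ control on $\|\nabla u\|_{L^\infty}$ together with the bound $\int_0^T\|\nabla d\|_{H^2}^2\,dt\le C$ from Lemma~\ref{lem2.4}.

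The one genuine difference is the test function on the director side. You multiply the time-differentiated equation $\eqref{NLC}_3$ by $-\Delta d_t$ and then bound the coupling term $\int(u_t\cdot\nabla)d\cdot\Delta d_t$ (and its momentum-side counterpart) directly via $\|u_t\|_{L^6}\|\nabla d\|_{L^3}\|\Delta d_t\|_{L^2}$. The paper instead multiplies by $(\Delta d-f(d))_t$; since the differentiated director equation reads $(u_t\cdot\nabla)d=\nu(\Delta d-f(d))_t-d_{tt}-(u\cdot\nabla)d_t$, this substitution turns the troublesome momentum-side term $-\lambda\int(u_t\cdot\nabla)d\,(\Delta d-f(d))_t\,dx$ \emph{exactly} into the dissipation $\lambda\nu\|(\Delta d-f(d))_t\|_{L^2}^2$ plus $\tfrac{\lambda}{2}\tfrac{d}{dt}\|\nabla d_t\|_{L^2}^2$ and lower-order remainders (see \eqref{eq2.40}). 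In other words, the paper exploits a structural cancellation between the liquid-crystal stress in the momentum equation and the transport term in the director equation, so that no separate estimate of $\int u_t\cdot\partial_t[(\nabla d)^T(\Delta d-f(d))]\,dx$ is ever needed. Your route works just as well because $\nabla d\in L^\infty$ (Lemma~\ref{lem2.3}) makes the direct bound harmless, but it costs you an extra integration by parts and the appearance of $\|\nabla^2 d\|_{L^3}$ in the remainder (which is handled exactly as in the paper's term $J_{11}$). The paper's choice is marginally cleaner and explains why the dissipation in \eqref{eq2.38} is stated for $(\Delta d-f(d))_t$ rather than $\Delta d_t$.
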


\begin{proof}
Differentiating the momentum equation $\eqref{NLC}_{2}$ with respect
to time, multiplying the resulting equation by $u_{t}$, integrating
it over $\Omega $ and making use of the mass conservation equation
$\eqref{NLC}_{1}$, one obtains that
\begin{align}\label{eq2.39}
&\frac{1}{2}\frac{d}{dt}\int_{\Omega}\rho|u_{t}|^{2}\text{d}x+\mu\int_{\Omega}|\nabla
u_{t}|^{2}\text{d}x-\int_{\Omega}P_{t}\operatorname{div}u_{t}\text{d}x\nonumber\\
=&-\int_{\Omega}\rho
u\cdot\nabla(\frac{|u_{t}|^{2}}{2}+(u\cdot\nabla )u u_{t})+\rho
(u_{t}\cdot\nabla)u
u_{t}\text{d}x-\lambda\int_{\Omega}(u_{t}\cdot\nabla )d_{t}(\Delta
d-f(d))\text{d}t\nonumber\\
&-\lambda\int_{\Omega}(u_{t}\cdot\nabla)d(\Delta
d-f(d))_{t}\text{d}x\nonumber\\
=&\int_{\Omega}\nabla\rho \cdot
u\frac{|u_{t}|^{2}}{2}+\rho\operatorname{div}
u\frac{|u_{t}|^{2}}{2}-\rho u\cdot \nabla((u\cdot\nabla )u
u_{t})-\rho (u_{t}\cdot\nabla)u
u_{t}\text{d}x\nonumber\\
&-\lambda\int_{\Omega}(u_{t}\cdot\nabla )d_{t}(\Delta
d-f(d))\text{d}t -\lambda\int_{\Omega}(u_{t}\cdot\nabla)d(\Delta
d-f(d))_{t}\text{d}x.
\end{align}

Differentiating the liquid crystal equation $\eqref{NLC}_{3}$ with
respect to time gives
\begin{align*}
(u_{t}\cdot\nabla)d=\nu (\Delta d-f(d))_{t}-d_{tt}-(u\cdot\nabla
)d_{t}.
\end{align*}
Multiplying the above equality with $(\Delta d-f(d))_{t}$ and
integrating over $\Omega$, one obtains the equality
\begin{align}\label{eq2.40}
&\int_{\Omega}(u_{t}\cdot\nabla)d(\Delta
d-f(d))_{t}\text{d}x\nonumber\\
=&\int_{\Omega} (\nu|(\Delta d-f(d))_{t}|^{2}-d_{tt}\Delta d_{t}+
d_{tt}f(d)_{t}-(u\cdot\nabla)d_{t}(\Delta
d-f(d))_{t})\text{d}x\nonumber\\
=&\int_{\Omega}\nu|(\Delta
d-f(d))_{t}|^{2}\text{d}x+\frac{1}{2}\frac{d}{dt}\|\nabla
d_{t}\|_{L^{2}}^{2}-\int_{\Omega}((u_{t}\cdot\nabla)d) f(d)_{t}\text{d}x\nonumber\\
&+\int_{\Omega}\nu f(d)_{t}(\Delta
d-f(d))_{t}\text{d}x-\int_{\Omega} ((u\cdot \nabla) d_{t})\Delta
d_{t}\text{d}x\nonumber\\
=&\int_{\Omega}\nu|(\Delta
d-f(d))_{t}|^{2}\text{d}x+\frac{1}{2}\frac{d}{dt}\|\nabla
d_{t}\|_{L^{2}}^{2}-\int_{\Omega}((u_{t}\cdot\nabla)d) f(d)_{t}\text{d}x\nonumber\\
&+\int_{\Omega}\nu f(d)_{t}(\Delta d-f(d))_{t}
\text{d}x+\int_{\Omega} ((\nabla u\cdot \nabla) d_{t}\nabla
d_{t}-\frac{1}{2}\operatorname{div}u|\nabla d_{t}|^{2})\text{d}x,
\end{align}
where we have used the fact
\begin{align*}
-\int_{\Omega} ((u\cdot \nabla) d_{t})\Delta
d_{t}\text{d}x=&-\sum_{i,j=1}^{3}\int_{\Omega}
u_{i}\partial_{i}d_{t}\partial_{jj}d_{t}\text{d}x\\
=&\sum_{i,j=1}^{3}\int_{\Omega}(\partial_{j}u_{i}\partial_{i}d_{t}\partial_{j}d_{t}+
u_{i}\partial_{i}(\frac{|\partial_{j}d_{t}|^{2}}{2})\text{d}x\\
=&\sum_{i,j=1}^{3}(\int_{\Omega}(\partial_{j}u_{i}\partial_{i}d_{t}\partial_{j}d_{t}\text{d}x
-\frac{1}{2}\int_{\Omega}\partial_{i}u_{i}|\partial_{j}d_{t}|^{2}\text{d}x)\\
=&\int_{\Omega} ((\nabla u\cdot \nabla) d_{t}\nabla
d_{t}-\frac{1}{2}\operatorname{div}u|\nabla d_{t}|^{2})\text{d}x
\end{align*}
in the last equality.

From the equation \eqref{eq2.18}, we can derive
\begin{align}\label{eq2.41}
\int_{\Omega
}P_{t}\operatorname{div}u_{t}\text{d}x=-\int_{\Omega}P'(\rho)(\nabla
\rho\cdot
u+\rho\operatorname{div}u)\operatorname{div}u_{t}\text{d}x.
\end{align}
Inserting the equalities \eqref{eq2.40} and \eqref{eq2.41} into
\eqref{eq2.39} derives
\begin{align}\label{eq2.42}
&\frac{d}{dt}\int_{\Omega}(\frac{1}{2}\rho |u_{t}|^{2}+\lambda
|\nabla d_{t}|^{2})\text{d}x+\mu\|\nabla
u_{t}\|_{L^{2}}^{2}+\lambda\nu\|(\Delta
d-f(d))_{t}\|_{L^{2}}^{2}\nonumber\\
\leq&
C\!\!\int_{\Omega}\!(|\nabla\rho||u||u_{t}|^{2}\!+\!\rho|\operatorname{div}u||u_{t}|^{2}\!+\!\rho|u||u_{t}||\nabla
u|^{2}\!+\!\rho |u|^{2}|u_{t}||\nabla^{2}u|\!+\!\rho |u|^{2}|\nabla
u||\nabla
u_{t}|\!+\!\rho|u_{t}|^{2}|\nabla u|)\text{d}x\nonumber\\
&+C\int_{\Omega}(|(u_{t}\cdot\nabla)d f(d)_{t}|+|(\Delta
d-f(d))_{t}f(d)_{t}|+|(\nabla u\cdot\nabla )d_{t}\nabla
d_{d}|+|\operatorname{div}u||\nabla d_{t}|^{2})\text{d}x\nonumber\\
&+C\int_{\Omega}|(u_{t}\cdot\nabla)d_{t}(\Delta
d-f(d))|\text{d}x+C\int_{\Omega}|p'(\rho)||\nabla
\rho||u||\operatorname{div}u_{t}|+\rho|P'(\rho)||\operatorname{div}u||\operatorname{div}u_{t}|\text{d}x\nonumber\\
=& \sum_{j=1}^{13}J_{j}.
\end{align}
We will estimate $J_{j}$ term by term. In the following
calculations, we will make extensive use of Sobolev embedding,
H\"{o}lder inequality, Lemmas \ref{lem2.1}--\ref{lem2.4} and
 the estimate \eqref{eq2.5},
\begin{align*}
&J_{1}\leq C\|\nabla
\rho\|_{L^{2}}\|u_{t}\|_{L^{6}}^{2}\|u\|_{L^{6}}\leq C\|\nabla
u_{t}\|_{L^{2}}^{2}\|\nabla u\|_{L^{2}}\leq \varepsilon\|\nabla
u_{t}\|_{L^{2}}^{2}+C\varepsilon^{-1};\\
&J_{2}+J_{6}\leq C \|\nabla u\|_{L^{\infty}}\|\rho^{\frac{1}{2}}
u_{t}\|_{L^{2}}^{2};\\
&J_{7}\leq C\|u_{t}\|_{L^{2}}\|\nabla
d\|_{L^{3}}(\|d\|_{L^{\infty}}^{2}+1)\|d_{t}\|_{L^{6}}\\
&\quad\leq C\| \rho^{\frac{1}{2}}u_{t}\|_{L^{2}}\|\nabla
d_{t}\|_{L^{2}}\leq \varepsilon\|\rho^{\frac{1}{2}}
u_{t}\|_{L^{2}}^{2}+C\varepsilon^{-1} \|\nabla
d_{t}\|_{L^{2}}^{2};\\
&J_{8}\leq C\|(\Delta d-f(d))_{t}\|_{L^{2}}\|f(d)_{t}\|_{L^{2}}\leq
\varepsilon\|(\Delta
d-f(d))_{t}\|_{L^{2}}^{2}+C\varepsilon^{-1}\|d_{t}\|_{L^{2}}^{2};\\
&J_{9}+J_{10}=\int_{\Omega}|(\nabla u\cdot\nabla)d_{t}\nabla d_{t}|+|\operatorname{div}u||\nabla d_{t}|^{2}\text{d}x\\
&\quad\quad\quad \leq C\|\nabla u\|_{L^{\infty}}\|\nabla
d_{t}\|_{L^{2}}^{2}; \\
&J_{11}\leq C\|u_{t}\|_{L^{6}}\|\nabla d_{t}\|_{L^{2}}\|\Delta
d\|_{L^{3}}+C\|u_{t}\|_{L^{2}}\|\nabla
d_{t}\|_{L^{2}}(\|d\|_{L^{\infty}}^{2}+1)\|d\|_{L^{\infty}}\\
&\quad\leq C\|\nabla u_{t}\|_{L^{2}}\|\nabla
d_{t}\|_{L^{2}}\|d\|_{H^{2}}^{\frac{1}{2}}\|\nabla
d\|_{H^{2}}^{\frac{1}{2}}+C\|\rho^{\frac{1}{2}}u_{t}\|_{L{2}}\|\nabla
d_{t}\|_{L^{2}}\\
&\quad\leq \varepsilon\|\nabla
u_{t}\|_{L^{2}}^{2}+C\varepsilon^{-1}\|\nabla
d_{t}\|_{L^{2}}^{2}(\|\nabla
d\|_{H^{2}}^{2}+1)+\varepsilon\|\rho^{\frac{1}{2}}u_{t}\|_{L^{2}}^{2}+C\varepsilon^{-1}\|\nabla
d_{t}\|_{L^{2}}^{2}\\
&\quad\leq \varepsilon\|\nabla
u_{t}\|_{L^{2}}^{2}+\varepsilon\|\rho^{\frac{1}{2}}u_{t}\|_{L^{2}}^{2}+C\varepsilon^{-1}\|\nabla
d_{t}\|_{L^{2}}^{2}(\|\nabla d\|_{H^{2}}^{2}+1);
\end{align*}
and
\begin{align*}
J_{13}\leq C\|\rho\|_{L^{\infty}}\|\nabla u\|_{L^{2}}\|\nabla
u_{t}\|_{L^{2}}\leq \varepsilon\|\nabla
u_{t}\|_{L^{2}}^{2}+C\varepsilon^{-1}.
\end{align*}
To estimate the terms $J_{3},J_{4},J_{5}$ and $J_{12}$,  by using
the standard elliptic estimate on \eqref{eq2.16} and making use of
the liquid crystal equation $\eqref{NLC}_{3}$ yield that
\begin{align*}
\|u\|_{H^{2}}\leq &C(\|\rho u_{t}\|_{L^{2}}+\|\rho u\cdot\nabla
u\|_{L^{2}}+\|\nabla P\|_{L^{2}}+\|(\nabla d)^{T}(
\Delta d-f(d))\|_{L^{2}})\nonumber\\
\leq &C(\|\rho
u_{t}\|_{L^{2}}+\|\rho u\cdot\nabla u\|_{L^{2}}+\|\nabla
P\|_{L^{2}}+\|(\nabla d)^{T}(
d_{t}+(u\cdot\nabla)d)\|_{L^{2}})\nonumber\\
\leq
&C(\|\rho^{\frac{1}{2}}u_{t}\|_{L^{2}}+\|\rho\|_{L^{6}}\|u\|_{L^{6}}\|\nabla
u\|_{L^{6}}+\|\nabla \rho\|_{L^{2}}+\|(\nabla
d)^{T}(d_{t}+(u\cdot\nabla)d)\|_{L^{2}}\nonumber\\
\leq &C(\|\rho^{\frac{1}{2}}u_{t}\|_{L^{2}}+\sigma\|\nabla
u\|_{H^{1}}+\sigma^{-1}\|\nabla u\|_{L^{2}}+\|\nabla
\rho\|_{L^{2}}+\|\nabla d\|_{L^{3}}\|d_{t}\|_{L^{6}}+\|\nabla
d\|_{L^{2}}^{2}\|u\|_{L^{6}})\nonumber\\
\leq&C(\|\rho^{\frac{1}{2}}u_{t}\|_{L^{2}}+\sigma\|\nabla
u\|_{H^{1}}+\sigma^{-1}\|\nabla u\|_{L^{2}}+\|\nabla
\rho\|_{L^{2}}+\|\nabla d_{t}\|_{L^{2}}),\nonumber
\end{align*}
where we have used the estimates \eqref{eq2.6} and \eqref{eq2.10} in
the last inequality. Taking $\sigma$ small enough yields
\begin{align}\label{eq2.43}
\|u\|_{H^{2}}\leq &C(\|\rho^{\frac{1}{2}}u_{t}\|_{L^{2}}+\|\nabla
u\|_{L^{2}}+\|\nabla \rho\|_{L^{2}}+\|\nabla d_{t}\|_{L^{2}}).
\end{align}
Making use of estimates \eqref{eq2.14} and \eqref{eq2.43}, we can
estimate $J_{3},J_{4},J_{5}$ and  $J_{12}$ as
\begin{align*}
&J_{3}+J_{4}+J_{5}=\int_{\Omega}\rho |u||\nabla
u|^{2}|u_{t}|+\rho|u|^{2}|\nabla^{2} u||u_{t}|+\rho|u|^{2}|\nabla
u||\nabla u_{t}|\text{d}x\\
&\quad\quad\leq
C(\|\rho\|_{L^{\infty}}\|u\|_{L^{6}}\|u_{t}\|_{L^{6}}\|\nabla
u\|_{L^{2}}\|\nabla
u\|_{L^{6}}+\|\rho\|_{L^{\infty}}\|u_{t}\|_{L^{6}}\|u\|_{L^{6}}^{2}\|\nabla^{2}u\|_{L^{2}}\\
&\quad\quad\quad\quad+\|\rho\|_{L^{\infty}}\|u\|_{L^{6}}^{2}\|\nabla
u\|_{L^{6}}\|\nabla u_{t}\|_{L^{2}})\\
&\quad\quad\leq C\|\nabla u_{t}\|_{L^{2}}\|\nabla
u\|_{L^{2}}^{2}\|u\|_{H^{2}}\leq \varepsilon\|\nabla
u_{t}\|_{L^{2}}^{2}+C\varepsilon^{-1}\|u\|_{H^{2}}^{2}\\
&\quad\quad\leq\varepsilon\|\nabla
u_{t}\|_{L^{2}}^{2}+C\varepsilon^{-1}(\|\rho^{\frac{1}{2}}u_{t}\|_{L^{2}}^{2}+\|\nabla
d_{t}\|_{L^{2}}^{2}+\|\nabla u\|_{L^{2}}^{2}+\|\nabla
\rho\|_{L^{2}}^{2})\\
&\quad\quad\leq\varepsilon\|\nabla
u_{t}\|_{L^{2}}^{2}+C\varepsilon^{-1}(\|\rho^{\frac{1}{2}}u_{t}\|_{L^{2}}^{2}+\|\nabla
d_{t}\|_{L^{2}}^{2}+1);\\
&J_{12}\leq C\|\nabla \rho\|_{L^{2}}\|u\|_{L^{\infty}}\|\nabla
u_{t}\|_{L^{2}}\leq C\|\nabla u_{t}\|_{L^{2}}\|u\|_{H^{2}}\\
&\quad\leq \varepsilon\|\nabla
u_{t}\|_{L^{2}}^{2}+C\varepsilon^{-1}\|u\|_{H^{2}}^{2}\\
&\quad\leq \varepsilon\|\nabla
u_{t}\|_{L^{2}}^{2}+C\varepsilon^{-1}(\|\rho^{\frac{1}{2}}u_{t}\|_{L^{2}}^{2}+\|\nabla
d_{t}\|_{L^{2}}^{2}+\|\nabla u\|_{L^{2}}^{2}+\|\nabla
\rho\|_{L^{2}}^{2})\\
&\quad\leq \varepsilon\|\nabla
u_{t}\|_{L^{2}}^{2}+C\varepsilon^{-1}(\|\rho^{\frac{1}{2}}u_{t}\|_{L^{2}}^{2}+\|\nabla
d_{t}\|_{L^{2}}^{2}+1)
\end{align*}
Substituting all the estimates of $J_{j}$ into \eqref{eq2.42}, and
taking $\varepsilon$ small enough, we obtain
\begin{align}\label{eq2.44}
&\frac{d}{dt}\int_{\Omega}(\rho |u_{t}|^{2}+|\nabla
d_{t}|^{2})\text{d}x+\|\nabla u_{t}\|_{L^{2}}^{2}+\|(\Delta
d-f(d))_{t}\|_{L^{2}}^{2}\nonumber\\
\leq & C[\|\rho^{\frac{1}{2}}u_{t}\|_{L^{2}}^{2}(\|\nabla
u\|_{L^{\infty}}+C)+\|\nabla d_{t}\|_{L^{2}}^{2}(\|\nabla
u\|_{L^{\infty}}+\|\nabla d\|_{H^{2}}^{2}+C)+\|\nabla
d_{t}\|_{L^{2}}^{2}+1]\nonumber\\
\leq & C(\|\rho^{\frac{1}{2}}u_{t}\|_{L^{2}}^{2}+\|\nabla
d_{t}\|_{L^{2}}^{2})(\|\nabla u\|_{L^{\infty}}+\|\nabla
d\|_{H^{2}}^{2}+C)+C(\|\nabla d_{t}\|_{L^{2}}^{2}+1).
\end{align}
Applying the Gronwall's inequality to estimate \eqref{eq2.44}, we
deduce
\begin{align}\label{eq2.45}
&\sup_{0\leq t\leq T}\int_{\Omega }(\rho |u_{t}|^{2}+|\nabla
d_{t}|^{2})\text{d}x+\int_{0}^{T}\|\nabla
u_{t}\|_{L^{2}}^{2}+\|(\Delta
d-f(d))_{t}\|_{L^{2}}^{2}\text{d}t\nonumber\\
\leq& C\int_{0}^{T}(\|\nabla
d_{t}\|_{L^{2}}^{2}+1)\text{d}t\exp\{\int_{0}^{T}(\|\nabla
u\|_{L^{\infty}}+\|\nabla d\|_{H^{2}}^{2}+C)\text{d}t\}\leq C,
\end{align}
where we have used estimate \eqref{eq2.15} and the assumption
\eqref{eq2.1} in the last inequality. This completes the proof of
Lemma \ref{lem2.5}.
\end{proof}
\medskip

The following lemma gives the higher order norm estimates of $u$,
$d$ and $\rho$.

\begin{lemma}\label{lem2.6}
Under the assumption \eqref{eq2.1}, it holds that for $0\leq T<
T^{*}$
\begin{align}
     \label{eq2.46}
&\sup_{0\leq t\leq T}(\|u\|_{H^{2}}+\|\nabla d\|_{H^{2}})\leq C;\\
      \label{eq2.47}
& \sup_{0\leq t\leq T}\|\nabla
\rho\|_{L^{6}}+\int_{0}^{T}\|\nabla^{2} u\|_{L^{6}}^{2}\text{d}t\leq
C.
\end{align}
\end{lemma}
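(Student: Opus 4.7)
The plan is to accumulate four regularity bounds in sequence: first $\|u\|_{H^{2}}$, then $\|\nabla d\|_{H^{2}}$ (equivalently a bound on $\|d\|_{H^{3}}$), and finally $\|\nabla\rho\|_{L^{6}}$ together with the coupled time-integrated bound on $\|\nabla^{2}u\|_{L^{6}}$. Each step will rely on the preceding ones together with the material-derivative bounds from Lemma \ref{lem2.5}.

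Step 1 is immediate: inequality \eqref{eq2.43}, already derived inside the proof of Lemma \ref{lem2.5}, reads $\|u\|_{H^{2}}\le C(\|\rho^{1/2}u_{t}\|_{L^{2}}+\|\nabla u\|_{L^{2}}+\|\nabla\rho\|_{L^{2}}+\|\nabla d_{t}\|_{L^{2}})$, and every right-hand term is uniformly bounded on $[0,T]$ by Lemmas \ref{lem2.4}--\ref{lem2.5}. This yields $\sup_{t\le T}\|u\|_{H^{2}}\le C$ and, via Sobolev embedding $H^{2}\hookrightarrow L^{\infty}$, the bound $\|u\|_{L^{\infty}}\le C$. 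For Step 2 I apply standard $H^{3}$-elliptic regularity to the orientation equation $\nu\Delta d=d_{t}+u\cdot\nabla d+\nu f(d)$ exactly as in \eqref{eq2.31}, controlling the right-hand side by $\|\nabla d_{t}\|_{L^{2}}$, $\|\nabla u\|_{L^{2}}\|\nabla d\|_{L^{\infty}}$, $\|u\|_{L^{\infty}}\|\nabla^{2}d\|_{L^{2}}$ and Ginzburg--Landau lower-order pieces, all bounded by Lemmas \ref{lem2.3}--\ref{lem2.5} and Step 1. This gives $\sup_{t\le T}\|\nabla d\|_{H^{2}}\le C$ and, again by Sobolev embedding, $\|\Delta d\|_{L^{6}}\le C$.

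For Step 3, I would apply $\nabla$ to the continuity equation, test against $|\nabla\rho|^{4}\nabla\rho$, and integrate by parts to obtain
\[
\frac{d}{dt}\|\nabla\rho\|_{L^{6}}\le C(\|\nabla u\|_{L^{\infty}}+1)\|\nabla\rho\|_{L^{6}}+C\|\nabla^{2}u\|_{L^{6}}.
\]
Simultaneously I would apply $W^{2,6}$-elliptic regularity to the momentum equation written as $\mu\Delta u=\rho\dot u+\nabla P+\lambda(\nabla d)^{T}(\Delta d-f(d))$ to get
\[
\|\nabla^{2}u\|_{L^{6}}\le C\bigl(\|\rho\dot u\|_{L^{6}}+\|\nabla\rho\|_{L^{6}}+\|\nabla d\|_{L^{\infty}}\|\Delta d\|_{L^{6}}+1\bigr).
\]
Here $\|\rho\dot u\|_{L^{6}}\le\|\rho u_{t}\|_{L^{6}}+\|\rho u\cdot\nabla u\|_{L^{6}}\lesssim\|\nabla u_{t}\|_{L^{2}}+\|u\|_{L^{\infty}}\|\nabla u\|_{L^{6}}$, with the last factor bounded by Step 1, and the factor $\|\nabla d\|_{L^{\infty}}\|\Delta d\|_{L^{6}}$ bounded by Lemma \ref{lem2.3} and Step 2.

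Substituting the second display into the first produces a linear ODI of the form $\frac{d}{dt}\|\nabla\rho\|_{L^{6}}\le C(\|\nabla u\|_{L^{\infty}}+1)\|\nabla\rho\|_{L^{6}}+C\|\nabla u_{t}\|_{L^{2}}+C$. The hypothesis \eqref{eq2.1} controls $\int_{0}^{T}\|\nabla u\|_{L^{\infty}}\,dt$, and Cauchy--Schwarz converts the $L^{2}_{t}$ bound on $\|\nabla u_{t}\|_{L^{2}}$ from Lemma \ref{lem2.5} into an $L^{1}_{t}$ bound; Gronwall's inequality then yields $\sup_{t\le T}\|\nabla\rho\|_{L^{6}}\le C$, and feeding this back into the elliptic estimate gives $\int_{0}^{T}\|\nabla^{2}u\|_{L^{6}}^{2}\,dt\le C$. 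The main obstacle is exactly this coupling: neither $\|\nabla\rho\|_{L^{6}}$ nor $\|\nabla^{2}u\|_{L^{6}}$ can be estimated independently, and because Lemma \ref{lem2.5} gives only $L^{2}_{t}$ control on $\|\nabla u_{t}\|_{L^{2}}$, the Cauchy--Schwarz conversion to an $L^{1}_{t}$ source term is the tightest point of the Gronwall argument.
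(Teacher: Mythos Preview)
Your proposal is correct and follows essentially the same route as the paper: the elliptic estimate \eqref{eq2.43} for $\|u\|_{H^{2}}$, $H^{3}$-elliptic regularity on $\eqref{NLC}_{3}$ for $\|\nabla d\|_{H^{2}}$, and then the coupled $L^{6}$ ODI for $\nabla\rho$ combined with $W^{2,6}$-elliptic regularity on the momentum equation, closed by Gronwall and the $L^{2}_{t}$ bound on $\|\nabla u_{t}\|_{L^{2}}$ from Lemma~\ref{lem2.5}. The only cosmetic difference is that in Step~2 you bound $\||u||\nabla^{2}d|\|_{L^{2}}$ directly via $\|u\|_{L^{\infty}}\|\nabla^{2}d\|_{L^{2}}$ (using Step~1 and Lemma~\ref{lem2.4}) rather than invoking the auxiliary estimate \eqref{eq2.33}, and in Step~3 you substitute the elliptic bound into the ODI before applying Gronwall rather than after; both variants are equivalent.
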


\begin{proof}
From estimates \eqref{eq2.14}, \eqref{eq2.38} and \eqref{eq2.43}, we
have
\begin{align}\label{eq2.48}
\|u\|_{H^{2}}\leq C (\|\rho^{\frac{1}{2}}u_{t}\|_{L^{2}}+\|\nabla
u\|_{L^{2}}+\|\nabla \rho\|_{L^{2}}+\|\nabla d_{t}\|_{L^{2}})\leq C.
\end{align}
By using estimates \eqref{eq2.31} and \eqref{eq2.33}, we have
\begin{align}\label{eq2.49}
\|\nabla d\|_{H^{2}}\leq& C(\|\nabla^{3}d\|_{L^{2}}+\|\nabla
d\|_{L^{2}})\nonumber\\
\leq &C(\|\nabla d_{t}\|_{L^{2}}+\|\nabla
u\|_{L^{2}}+\||u||\nabla^{2}d|\|_{L^{2}}+\|\nabla d\|_{L^{2}})\nonumber\\
\leq& C(\|\nabla d_{t}\|_{L^{2}}+\|\nabla u\|_{L^{2}}+\|\nabla
u\|_{L^{2}}^{2}+\|\nabla d\|_{L^{2}}+C)\leq C,
\end{align}
where we have used the estimates \eqref{eq2.10}, \eqref{eq2.14} and
\eqref{eq2.38} in the last inequality. Combining the estimates
\eqref{eq2.48} and \eqref{eq2.49} above gives the estimate
\eqref{eq2.46}.

Applying the operator $\nabla$ to the mass conservation equation
$\eqref{NLC}_{1}$, then multiplying the resulting equation by
$6|\nabla \rho|^{4}\nabla\rho$ and integrating over $\Omega$ give
\begin{align*}
\frac{d}{dt}\|\nabla
\rho\|_{L^{6}}^{6}=&\!-6\!\int_{\Omega}\!|\nabla \rho|^{6}\nabla
u\text{d}x-\!\int_{\Omega }\!\nabla(|\nabla \rho|^{6})\cdot
u\text{d}x-6\!\int_{\Omega}\!|\nabla\rho|^{6}\operatorname{div}u\text{d}x
-6\!\int_{\Omega}\!\rho|\nabla\rho|^{4}\nabla\rho\nabla\operatorname{div}u\text{d}x\nonumber\\
\leq&C\|\nabla u\|_{L^{\infty}}\|\nabla \rho\|_{L^{6}}^{6}+C\|\nabla
\operatorname{div}u\|_{L^{6}}\|\nabla\rho\|_{L^{6}}^{5},
\end{align*}
that is
\begin{align}\label{eq2.50}
\frac{d}{dt}\|\nabla\rho\|_{L^{6}}\leq C\|\nabla
u\|_{L^{\infty}}\|\nabla\rho\|_{L^{6}}+C\|\nabla\operatorname{div}
u\|_{L^{6}}.
\end{align}
By using the Gronwall's inequality to the above estimate gives
\begin{align}\label{eq2.51}
\|\nabla \rho\|_{L^{6}}\leq&
(\|\rho_{0}\|_{W^{1,6}}+C\int_{0}^{T}\|\nabla\operatorname{div}u\|_{L^{6}}\text{d}t)\exp\{C\int_{0}^{T}\|\nabla
u\|_{L^{\infty}}\text{d}t\}\nonumber\\
\leq & C(\int_{0}^{T}\|\nabla^{2}u\|_{L^{6}}\text{d}t+1).
\end{align}
Applying the standard elliptic regularity result
$\|\nabla^{2}u\|_{L^{6}}\leq C\|\Delta u\|_{L^{6}}$, H\"{o}lder
inequality, Sobolev embedding, the estimates \eqref{eq2.10} and
\eqref{eq2.46}, we have
\begin{align}\label{eq2.52}
\|\nabla^{2}u\|_{L^{6}}\leq& C(\|\rho u_{t}\|_{L^{6}}+\|\rho
u\cdot\nabla u\|_{L^{6}}+\|\nabla P\|_{L^{6}}+\|(\nabla
d)^{T}(\Delta d-f(d))\|_{L^{6}})\nonumber\\
\leq &C(\|\nabla u_{t}\|_{L^{2}}+\|u\|_{L^{\infty}}\|\nabla
u\|_{L^{6}}+\|\nabla \rho\|_{L^{6}}+\|\nabla
d\|_{L^{\infty}}\|\Delta d\|_{L^{2}}+\|\nabla
d\|_{L^{6}}\|f(d)\|_{L^{\infty}})\nonumber\\
\leq& C(\|\nabla u_{t}\|_{L^{2}}+\|u\|_{H^{2}}^{2}+\|\nabla
\rho\|_{L^{6}}+\|d\|_{H^{3}}^{2}+\|d\|_{H^{2}})\nonumber\\
\leq& C(\|\nabla u_{t}\|_{L^{2}}+\|\nabla \rho\|_{L^{6}}+1).
\end{align}
Inserting the estimate \eqref{eq2.52} into \eqref{eq2.51} yields
\begin{align*}
\|\nabla \rho\|_{L^{6}}\leq& C\int_{0}^{T}(\|\nabla
u_{t}\|_{L^{2}}+\|\nabla \rho\|_{L^{6}}+1)\text{d}t\\
\leq&C\int_{0}^{T}(\|\nabla u_{t}\|_{L^{2}}^{2}+\|\nabla
\rho\|_{L^{6}}+1)\text{d}t\leq
C\int_{0}^{T}\|\nabla\rho\|_{L^{6}}\text{d}t+C,
\end{align*}
where we have used the estimate \eqref{eq2.38}, then applying the
Gronwall's inequality gives
\begin{align}\label{eq2.53}
\sup_{0\leq t\leq T}\|\nabla \rho\|_{L^{6}}\leq C.
\end{align}
From \eqref{eq2.52} and \eqref{eq2.53}, we have
\begin{align}\label{eq2.54}
\int_{0}^{T}\|\nabla^{2} u\|_{L^{6}}^{2}\text{d}t\leq
C(\int_{0}^{T}\|\nabla u_{t}\|_{L^{2}}^{2}\text{d}t+\sup_{0\leq
t\leq T}\|\nabla \rho\|_{L^{6}}^{2}+C)\leq C.
\end{align}
It is easy to known that the estimate \eqref{eq2.48} follows
\eqref{eq2.53} and \eqref{eq2.54} immediately. This completes the
proof of Lemma \ref{lem2.6}.
\end{proof}
\medskip

We now give the proof of Theorem \ref{thm1.2}.
\medskip

\textbf{Proof of Theorem \ref{thm1.2}.} From the existence result of
Theorem \ref{thm1.1}, we know that $\|u(t)\|_{H^{2}}$,
$\|\rho(t)\|_{W^{1,6}}$, $\|d(t)\|_{H^{3}}$ and
$\|\rho^{\frac{1}{2}}u_{t}(t)\|_{L^{2}}$ are all continuous on the
time interval $[0,T^{*})$. From the above Lemmas
\ref{lem2.1}--\ref{lem2.6}, we see that for all $T\in (0,T^{*})$,
\begin{align}\label{eq2.55}
&(\|u\|_{H^{2}},\|\rho\|_{W^{1,6}},\|d\|_{H^{3}},\|\rho^{\frac{1}{2}}u_{t}\|_{L^{2}})(T)\leq
C.
\end{align}
Furthermore, there hold
\begin{align}\label{eq2.56}
\rho^{\frac{1}{2}} u_{t}+\rho^{\frac{1}{2}} u\cdot\nabla u\in
L^{\infty}([0,T^{*}];L^{2}),
\end{align}
and for all $T\in (0,T^{*})$,
\begin{align}\label{eq2.57}
(\!\mu \Delta u\!-\!\lambda \operatorname{div} (\nabla d\odot\nabla
d\!-\!\frac{1}{2}(|\nabla d|^{2}+\!F(d)))-\!\nabla P)(T)=(\rho
u_{t}+\rho u\cdot\nabla u)(T)=\sqrt{\rho}g,
\end{align}
where $g(T)\triangleq (\rho^{\frac{1}{2}} u_{t}+\rho^{\frac{1}{2}}
u\cdot\nabla u)(\cdot,T)\in L^{2} $. Therefore, from \eqref{eq2.56}
and \eqref{eq2.57}, we can take $(\rho,u,d)|_{t=T}$ with any
$T\in(0,T^{*})$ as the initial data and apply Theorem \ref{thm1.1}
to extend the local strong solution to a time interval
$[T,T+\delta]$ for a uniform $\delta>0$ which only depends on the
bounds obtained in these lemmas, so that the solution can be
extended to the time interval $[0,T^{*}+\delta)$. This contradicts
with the maximality of $T^{*}$. Hence, the assumption
$\eqref{eq2.1}$ cannot be true. This completes the proof of Theorem
\ref{thm1.2}.\hfill$\Box$
\\
\\

\end{document}